\newcommand{%
    
    \import{./figures/}{.pdf_tex}
}[2][1]{%
    
    \import{./figures/}{#2.pdf_tex}
}
\newcommand{\rev}[1]{{\textcolor{black}{#1}}}
\newtheorem{Definition}{Definition}
\newtheorem{Remark}{Remark}
\newtheorem{Proposition}{Proposition}
\newtheorem{Theorem}{Theorem}
\newtheorem{Lemma}{Lemma}
\title{Networks of Pendula with Diffusive Interactions}
\author[$1$]{Riccardo Bonetto\orcidlink{0000-0001-8075-6147}\footnote{r.bonetto@rug.nl}}
\author[$1$]{Hildeberto Jardón-Kojakhmetov\orcidlink{0000-0001-8708-7409}\footnote{h.jardon.kojakhmetov@rug.nl}}
\author[$2$]{Christian Kuehn\orcidlink{0000-0002-7063-6173}\footnote{ckuehn@ma.tum.de}}
\begin{document}
\affil[$1$]{University of Groningen — Bernoulli Institute for Mathematics, Computer Science and Artificial Intelligence; Groningen, The Netherlands}
\affil[$2$]{Faculty of Mathematics, Technical University of Munich; Garching, Germany.}
\maketitle

\abstract{We study a system of coupled pendula with diffusive interactions, which could depend both on positions and on momenta. The coupling structure is defined by an undirected network, while the dynamic equations are derived from a Hamiltonian; as such, the energy is conserved. We provide sufficient conditions on the energy for bounded motion, identifying the oscillatory regime. We also describe the bifurcations of synchronised states, exploiting the quantities related to the network structure. Moreover, we consider patterns of anti-synchrony arising from the specific properties of the model. Such patterns can be linked to global properties of the system \rev{by looking at motion of the centre of mass}. Nonetheless, the anti-synchrony patterns play also a relevant role in the bifurcation analysis. For the case of graphs with a large number of nodes, we characterise the parameter range in which the bifurcations occur. We complement the analysis with some numerical simulations showing the interplay between bifurcations of the origin and transitions to chaos of nearby orbits. A key feature is that the observed chaotic regime emerges at low energies.}

\section{Introduction}

Coupled oscillators have been extensively studied for their prominence in applied sciences, and for their mathematical appeal \cite{ashwin1992dynamics, sepulchre1997localized}. Indeed, oscillatory motion is ubiquitous in nature. Systems showing complex oscillatory behaviour include neuronal networks \cite{ermentrout2002modeling, schwemmer2012theory, buzsaki2004neuronal}, biological systems \cite{strogatz1993coupled}, chemical oscillations \cite{Kuramoto2003}, among many others. Of course, physical systems play a relevant role in the picture, for example, Josephson junctions \cite{levi1978dynamics, wiesenfeld1998frequency} are modeled as coupled pendula, which are paradigmatic examples of nonlinear oscillators \cite{henderson1991geometry, huynh2010two, levi1988caterpillar}. It is well known that dissipative systems can show limit cycles, and in turn the dynamics of limit cycle oscillators can often be reduced to phase oscillators \cite{golomb1992clustering, restrepo2006synchronization, ashwin2015weak}. On the other hand, (planar) conservative systems cannot show limit cycles, and therefore different dynamics and techniques are needed. In the last years, coupled Hamiltonian systems have attracted more attention \cite{aguiar2018coupled, aguiar2019gradient, buono2015dynamics, chan2017topology, tourigny2017networks}. The subject allows for several applications, for example, complex quantum systems \cite{bastidas2015quantum, manzano2013synchronization, nokkala2023complex}, mechanical and biological models \cite{kovaleva2019nonstationary}, and neuromorphic engineering  \cite{njitacke2022hamiltonian}.

In the present paper, we focus our attention on models of $N\geq2$ coupled pendula conserving the energy of the system. In other words, the system is defined by a Hamiltonian, $H=H_0 + \kappa H_1$, where $H_0$ is related to the uncoupled system, and $H_1$ defines the interaction. The coupling is diffusive, and the topology is fixed by a network structure. It is possible to distinguish two main regimes for a pendulum: oscillatory and rotatory, the first one is associated to bounded motion, the later is potentially unbounded. For the coupled system, we give sufficient conditions for bounded motion in terms of the total energy of the system, see Proposition \ref{prop:bounded_motion}. In addition, and thanks to the symmetry properties of the system, it is possible to study patterns of anti-synchrony by means of partitions of the nodes of the network. So, we propose \rev{the centre of mass as a measure of} how `close' the system is to such patterns. In particular, we show that by increasing the coupling strength the system behaves more regularly close to a pattern (convergence is forbidden by the conservation of energy), see Proposition \ref{prop:frequency}. For fully synchronised states we show that their local behaviour is given by imaginary or real eigenvalues, Proposition \ref{prop:synch_eig}, forbidding Hamiltonian-Hopf bifurcations. Proposition \ref{prop:odd_iff} establishes a connection between anti-synchrony spaces and some eigenvectors of the Laplacian, we use this result to show that the bifurcations of the origin restricted to such anti-synchrony spaces are only pitchforks, see Proposition \ref{prop:pitchfork}. We provide an upper bound for the parameter range in which the bifurcations occurs, Proposition \ref{prop:intervals}, discussing the consequences for graphs with a large number of nodes. We conclude our study with some simulations highlighting the interplay between the bifurcations of the origin, and therefore a change in the eigenvalues, to transitions of nearby orbits from regular to chaotic, see Section \ref{sec:simulations}.\medskip


\section{Energy Preserving Coupled Pendula}

We consider network structures given by simple graphs $\mathcal{G}=\{\mathcal{V}, \mathcal{E} \}$, where $\mathcal{V}=\{1, \dots, N\}$ is the set of nodes and $\mathcal{E}$ is the set of edges with elements $\{i,j\}$ representing a connection between the node $i$ and the node $j$. Since we are going to study Hamiltonian systems, the choice of a simple graph structure is not restrictive, as Hamiltonian systems admit only undirected network structures \cite{chan2017topology}. The Hamiltonian defining the system is
\begin{equation}\label{eq:H__N_pendula}
     H=  \sum_{i\in \mathcal{V}} \frac{p_i^2}{2} - \cos{q_i}   + \kappa \sum_{\{i, j\} \in \mathcal{E}}  G(q_i-q_j,p_i-p_j) ,
\end{equation}
where $\kappa$ is a positive parameter associated to the `strength' of interaction. We consider analytic interactions of the form 
\begin{equation}\label{eq:analytical_expansion}
    G(x, y)=\sum_{l,m=0}^\infty c_{lm} x^{2l} y^{2m} ,
\end{equation}
where $c_{lm} \in \mathbb{R}$. We require $H$ to be bounded from below, then without loss of generality we also assume $G\geq0$. \rev{Networked systems are well-known to have symmetry properites related to the graph structure. The standard notion of symmetry in dynamical systems is \emph{equivariance} \cite{golubitsky2006nonlinear, golubitsky2012singularities}, meaning that there exist a group representation acting on the phase space commuting with the vector field. For Hamiltonian systems, it is possible to recognise the symmetries directly from the Hamiltonian function. In such a case, the symmetries appear as the group representation acting on the phase space leaving the Hamiltonian function \emph{invariant}, i.e., with the same form.}


\begin{Lemma}\label{lm:symmetry}
    \rev{Let $\text{Aut}(\mathcal{G})$ be the automorphism group of the graph $\mathcal{G}$, i.e., the set of permutations preserving the adjacency structure of $\mathcal{G}$.} The Hamiltonian \eqref{eq:H__N_pendula} is $\mathbb{Z}_2 \times \text{Aut}(\mathcal{G})$ invariant. Therefore, the vector field induced by $H$ is $\mathbb{Z}_2 \times \text{Aut}(\mathcal{G})$ equivariant.
\end{Lemma}

\begin{proof}
The statements of Lemma \ref{lm:symmetry} are straightforward to check by direct inspection of the equations.
\end{proof}

\begin{Remark}
    \rev{The representation of $\mathbb{Z}_2 \times \text{Aut}(\mathcal{G})$ we are considering in Lemma \ref{lm:symmetry} is given by the permutation representation of $\text{Aut}(\mathcal{G})$ acting on the conjugate variables $(q_i,p_i)$, $i=1, \dots, N$, and $\mathbb{Z}_2$ acting as reflections of the conjugate variables. Clearly $\text{Aut}(\mathcal{G})$ comes form the graph structure describing the interactions of the system, while $\mathbb{Z}_2$ originates from the even nature of the Hamiltonian, or equivalently from the odd nature of the vector field.}
\end{Remark}

The vector field induced by the Hamiltonian is described by the equations
\begin{equation}\label{eq:equations_motion}
    \begin{aligned}
        \dot{q}_i &= p_i + \kappa \sum_{j=1}^N a_{ij} G_{01}(q_i-q_j,p_i-p_j) ,\\
        \dot{p}_i &= - \sin{q_i} - \kappa \sum_{j=1}^N a_{ij} G_{10}(q_i-q_j,p_i-p_j) ,
    \end{aligned} 
\end{equation}
where $i\in \mathcal{V}$, $a_{ij}$ are the elements of the adjacency matrix of $\mathcal{G}$, and we use throughout the text the notation $G_{ij}(x,y)=\frac{\partial^{i+j}G}{\partial x^i\partial y^j}(x,y)$ for the derivatives of the interaction. 

\begin{Proposition}\label{prop:bounded_motion}
     Let $N\geq 2$, then for $H \leq 2-N$ the orbits of \eqref{eq:equations_motion} are bounded. 
\end{Proposition}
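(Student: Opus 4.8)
The plan is to exploit the two features that make the low-energy regime special: the conservation of $H$ along orbits, and the fact that every term in \eqref{eq:H__N_pendula} other than the pendulum potentials $-\cos q_i$ is non-negative. Indeed $\frac{p_i^2}{2}\ge0$, and by assumption $\kappa>0$ and $G\ge0$, so the coupling contributes a non-negative amount. Fixing an orbit of energy $E:=H\le 2-N$, I would read off the required bounds by discarding these non-negative pieces one group at a time.

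First I would bound the momenta. Since $\cos q_i\le1$ and the coupling is non-negative, rearranging $H=E$ gives $\sum_i\frac{p_i^2}{2}=E+\sum_i\cos q_i-\kappa\sum_{\{i,j\}}G\le E+N\le2$. Hence $|p_i|\le2$ for every $i$ at all times, so the momentum coordinates remain in a fixed compact set. Next I would control the positions by isolating a single pendulum. For any fixed $k$, solving $H=E$ for the $k$-th pendulum energy gives $\frac{p_k^2}{2}-\cos q_k=E-\sum_{i\ne k}\big(\frac{p_i^2}{2}-\cos q_i\big)-\kappa\sum_{\{i,j\}}G$; since every single-pendulum energy satisfies $\frac{p_i^2}{2}-\cos q_i\ge-1$ and the coupling is non-negative, this is at most $E+(N-1)\le1$. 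Thus each pendulum stays, for all time, inside the region bounded by its own separatrix $\{p^2/2-\cos q=1\}$. In the $(q_k,p_k)$ cylinder this sublevel set is a union of libration cells around $q_k=2\pi n$ that meet only at the saddle points $((2n+1)\pi,0)$, so confinement to it forces $q_k$ to remain in one bounded interval, unless the orbit reaches a saddle value.

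The delicate point, which I expect to be the main obstacle, is exactly the borderline $E=2-N$, where the estimate $\frac{p_k^2}{2}-\cos q_k\le1$ is saturated and $q_k$ could in principle reach $q_k=(2n+1)\pi$. I would resolve this by showing the angular velocity vanishes there. Whenever $q_k=(2n+1)\pi$ one has $\cos q_k=-1$, so $\frac{p_k^2}{2}\le0$ forces $p_k=0$; moreover, tracing back the chain of inequalities shows that saturation requires every slack term to vanish simultaneously, in particular $p_i=0$ for all $i$. Since $G$ is even in its second argument, $G_{01}(x,0)=0$, and the first equation of \eqref{eq:equations_motion} then gives $\dot q_k=p_k+\kappa\sum_j a_{kj}G_{01}(q_k-q_j,0)=0$. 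Hence the pendulum cannot cross the top and the orbit stays bounded. For the strict inequality $E<2-N$ the bound $\frac{p_k^2}{2}-\cos q_k<1$ is never saturated, $q_k$ never reaches a saddle value, and confinement to a single libration cell is immediate, so the only case needing the extra argument is the boundary level set.
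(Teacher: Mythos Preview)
Your strategy is essentially the paper's: isolate the only configurations through which an orbit could escape (one $q_k$ at an odd multiple of $\pi$, the remaining $q_i$ at even multiples, all momenta zero) and then show these cannot be traversed. Your per-pendulum estimate $p_k^2/2-\cos q_k\le E+(N-1)\le 1$ is a clean way to reach those configurations, equivalent to the paper's hypercube picture.

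The gap is the final inference in the borderline case $E=2-N$. From $\dot q_k(t_0)=0$ alone you \emph{cannot} conclude that $q_k$ does not cross $(2n{+}1)\pi$: a function with $q_k(t)-\pi\sim (t-t_0)^3$ satisfies your hypothesis and still crosses. What is needed---and what the paper actually uses---is that the full phase point is an \emph{equilibrium}. You already have every ingredient. Saturation of your chain of inequalities forces not only $p_i=0$ for all $i$ and $\cos q_i=1$ for $i\neq k$, but also $G(q_i-q_j,0)=0$ on every edge; since $G\ge 0$, each of these is a global minimum of $G$, hence also $G_{10}(q_i-q_j,0)=0$. Combined with $\sin q_i=0$ for all $i$, the second line of \eqref{eq:equations_motion} then gives $\dot p_i=0$, so $(\dot q,\dot p)=0$, and by uniqueness of solutions no non-constant orbit can reach that point in finite time. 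Your evenness observation $G_{01}(x,0)=0$ is correct but handles only the $\dot q$-equations; $G_{10}(x,0)$ need not vanish identically (take $G(x,y)=x^2$), so the minimum argument using $G\ge 0$ is genuinely required to close the proof.
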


\begin{proof}
    Since the kinetic energy and the interaction are non-negative, then momenta are always bounded. We are left to obtain the conditions under which the positions are also bounded.
    Let us rewrite the Hamiltonian \eqref{eq:H__N_pendula} as
    \begin{equation}
        H_\kappa=K - \sum_{i=1}^N \cos{q_i} + \kappa G ,
    \end{equation}
    where $K$ is the kinetic energy and $G$ \rev{is} the interaction. Now, let $\Sigma_\kappa=\{ q \in \mathbb{R}^N \ | \ H_\kappa \leq 2-N\}$ be the position domain defined the the inequality under consideration. Since the interaction is non-negative we have that the decoupled Hamiltonian
    \begin{equation}
        H_0=K - \sum_{i=1}^N \cos{q_i} ,
    \end{equation}
    defines a region $\Sigma_0=\{ q \in \mathbb{R}^N \ | \ H_0 \leq 2-N\}$ that contains $\Sigma_\kappa$. 
    We can rewrite the inequality for $\Sigma_0$ as follows 
    \begin{equation}
        K +(N-2) \leq  \sum_{i=1}^N \cos{q_i} ,
    \end{equation}
    and given the fact that $K\geq0$ we have 
    \begin{equation}\label{eq:ineq}
         \sum_{i=1}^N \cos{q_i} \geq N-2 .
    \end{equation}
    Due to the periodicity of the cosine function, we restrict ourselves to the $N$-dimensional cube $[-\pi, \pi]^N$. The region identified by \eqref{eq:ineq} in the hypercube $[-\pi, \pi]^N$ repeats periodically in $\mathbb{R}^N$. Therefore, only by crossing one of the facets of the cube is possible to have unbounded motion. So, let us consider \eqref{eq:ineq} restricted to the facets of $[-\pi, \pi]^N$. These two objects intersect at points with coordinates
    \begin{equation}\label{eq:points}
        q=\{ (\pm \pi,0, \dots, 0), (0, \pm \pi, 0, \dots, 0), \dots, (0, \dots, 0, \pm \pi) \},
    \end{equation}
    which are exactly $2N$ points intersecting the $2N$ facets of the hypercube, see Figure \ref{fig:q_domain3d}. 
%
    Let us now recall that $\Sigma_0 \supseteq \Sigma_\kappa$, so we need to prove that the points \eqref{eq:points} cannot be crossed by the coupled system under the condition $H_\kappa \leq 2-N$. Restricting $H_\kappa$ to the points \eqref{eq:points} we obtain the inequality $K + \kappa G \leq 0$. Consequently, we obtain the conditions $K=0$ and $G=0$, as they are both non-negative. We recall that $K=0$ implies that the momenta are equal to zero. So, all the terms of the interaction depending on the momenta are zero. \rev{Since $G$ restricted to the points \eqref{eq:points} must be zero, and it is a non-negative function, we have that \eqref{eq:points} are critical points of $G$, and therefore critical points of the vector field. So the motion is bounded as the points \eqref{eq:points} cannot be crossed.}
\end{proof}

\begin{figure}[ht]
\centering
\includegraphics[width=0.5\textwidth]{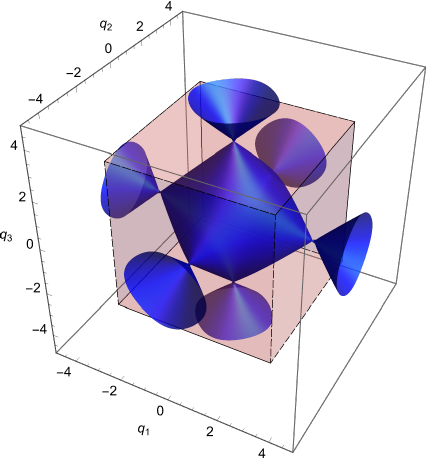}
\caption{The blue surface is defined by the equality $\cos{q_1}+\cos{q_2}+\cos{q_3}=1$, the pink cube is centered at zero and has a side of lenght $2\pi$. We can clearly see that the cube and the surface intersect in six points, one on each side of the cube. The only possibility to have unbounded motion at this energy ($E=-1$) is to cross one of those points. }
\label{fig:q_domain3d}
\end{figure}


\section{Patterns, Invariant Spaces, and Bifurcations}


Studying the dynamics of coupled systems can be very challenging. However, for networked systems one can resort to specific techniques interfacing graph properties to dynamics. The formalism of Golubitsky and Stewart \cite{golubitsky2006nonlinear} allows to find invariant spaces of the dynamical system by looking at balanced partitions of the nodes of the graph. Such relations are particularly relevant as they represent patterns of synchrony of the system. Synchronisation is a key feature of complex systems that has successfully linked mathematical properties to real-world applications \cite{arenas2008synchronization, fell2011role, abarbanel1996synchronisation}. More recently also anti-synchrony patterns have got more attention, especially in the context of chaotic systems \cite{kim2003anti, liu2006antisynchronization, meng2007robust, wedekind2001experimental}.

In this section, by studying \rev{the centre of mass}, we show how global variables can give useful information about anti-synchrony patterns. Later, we study synchrony by explicitly computing its stability. We also study the bifurcations of the origin, where a connection with anti-synchrony patterns is exploited.

\subsection{Anti-Synchrony and \rev{Centre of Mass}}

Networked systems, under some conditions, can exhibit anti-synchrony patterns where a group of nodes have exactly the opposite state of another group at each instant in time. In order to study such patterns, we can look at specific partitions of the set of nodes of the graph that are in correspondence with invariant spaces of the system \cite{neuberger2019synchrony, nijholt2023invariant}. The vector field \eqref{eq:equations_motion} belongs to the class of \emph{odd-difference-coupled} vector fields for which \emph{odd-balanced matched partitions} provide the conditions for anti-synchrony patterns \cite{neuberger2019synchrony}. Let us briefly recall the main definitions and results for odd-balanced matched partitions.
%
We consider \emph{odd partitions} (i.e., containing an odd number of pairwise disjoint subsets) of the set $\mathcal{V}$, where the subsets, $\mathcal{W}$, define equivalence classes for the nodes. For sake of brevity we use the same symbol for the subset and for its equivalence class; also, the equivalence class of a specific node $i$ is denoted by $[i]$.
\rev{A matching function, $m$, is a function acting on the elements, $\mathcal{W}$, of an odd partition such that $m^{-1}=m$, $m$ has only one fixed point, denoted by $\mathcal{W}_0$, and if the empty set, $\emptyset$, is an element of the partition, then $m(\emptyset)=\emptyset$. Roughly speaking, we can think that to each subset of the partition we assign a letter and a sign representing the equivalence class, the matching function acts as a change of sign, $-\mathcal{W} :=m(\mathcal{W})$.}

\begin{Definition}
    A matched partition is an odd partition with a matching function.
\end{Definition}

\begin{Definition}
    Let $\Tilde{\mathcal{V}}$ be a matched partition of $\mathcal{V}$, the linear subspace associated to the matched partition is defined as
    \begin{equation}
        \mathcal{A}:= \{ (q,p) \ | \ (q_i,p_i) = - (q_j,p_j) \ \text{if} \ [i]=-[j] \} ,
    \end{equation}
where the square brackets represent equivalence classes for the nodes.
\end{Definition}

\begin{Definition}
    The degree $d_{\mathcal{W}}(i)$ is the number of edges connecting nodes in $\mathcal{W}$ to $i$.
\end{Definition}

\begin{Definition}
    An odd-balanced partition is a matched partition $\Tilde{\mathcal{V}}$ such that 
    \begin{itemize}
        \item $d_{\mathcal{W}}(i)=d_{-\mathcal{W}}(j)$ whenever $[i]\neq\mathcal{W}_0$ and $\mathcal{W}\neq [i] = -[j]$,
        \item $d_{\mathcal{W}}(i)=d_{-\mathcal{W}}(i)$ whenever $[i]=\mathcal{W}_0\neq \mathcal{W}$,
    \end{itemize}
$\forall \mathcal{W} \in \Tilde{\mathcal{V}}$, and $\forall i,j \in \mathcal{V}$.
\end{Definition}

See Figure \ref{fig:patterns} for some examples.

\begin{Theorem}[Neuberger, Sieben and Swift  \cite{neuberger2019synchrony}]\label{thm:odd}
Let $\Tilde{\mathcal{V}}$ be a matched partition of $\mathcal{V}$. The linear subspace $\mathcal{A}$ is invariant under the flow of \eqref{eq:equations_motion} if and only if $\Tilde{\mathcal{V}}$ is an odd-balanced matched partiton.
\end{Theorem}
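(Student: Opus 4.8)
The plan is to use that, since $\mathcal{A}$ is a \emph{linear} subspace, it is invariant under the flow of \eqref{eq:equations_motion} precisely when the vector field is tangent to it along itself, i.e.\ $(\dot q,\dot p)\in\mathcal{A}$ whenever $(q,p)\in\mathcal{A}$. The first step is to record how the equations simplify on $\mathcal{A}$: there the state of a node depends only on its class, so writing $(q_{\mathcal{W}},p_{\mathcal{W}})$ for the common value of $(q_j,p_j)$ over $j\in\mathcal{W}$ one has $(q_{-\mathcal{W}},p_{-\mathcal{W}})=-(q_{\mathcal{W}},p_{\mathcal{W}})$ and $(q_{\mathcal{W}_0},p_{\mathcal{W}_0})=0$. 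Both same-class equality and the vanishing on $\mathcal{W}_0$ are already encoded in the defining relation of $\mathcal{A}$ (use a node of the nonempty partner class, resp.\ take $j=i$ when $[i]=\mathcal{W}_0=-[i]$), so nothing is lost. Grouping the coupling sums in \eqref{eq:equations_motion} by classes rewrites them as $\sum_{\mathcal{W}}d_{\mathcal{W}}(i)\,G_{01}(q_i-q_{\mathcal{W}},p_i-p_{\mathcal{W}})$, and analogously with $G_{10}$; membership $(\dot q,\dot p)\in\mathcal{A}$ then reduces to the two tangency requirements $(\dot q_i,\dot p_i)=-(\dot q_j,\dot p_j)$ for $[i]=-[j]$ and $(\dot q_i,\dot p_i)=0$ for $[i]=\mathcal{W}_0$.

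The key structural input is that $G_{01}$ and $G_{10}$ are \emph{odd} under $(x,y)\mapsto(-x,-y)$, which is immediate from the even expansion \eqref{eq:analytical_expansion}, and that the uncoupled parts $p_i$ and $-\sin q_i$ are odd as well. For the opposite-class requirement I would compute $-(\dot q_j,\dot p_j)$ with $[i]=-[j]$: the linear parts agree because $p_i=-p_j$ and $\sin q_i=-\sin q_j$, while in the coupling sum I re-index $\mathcal{W}\mapsto-\mathcal{W}$ and use $G_{01}(q_j-q_{-\mathcal{W}},p_j-p_{-\mathcal{W}})=G_{01}\bigl(-(q_i-q_{\mathcal{W}}),-(p_i-p_{\mathcal{W}})\bigr)=-G_{01}(q_i-q_{\mathcal{W}},p_i-p_{\mathcal{W}})$. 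The requirement then collapses to $\sum_{\mathcal{W}}\bigl(d_{\mathcal{W}}(i)-d_{-\mathcal{W}}(j)\bigr)G_{01}(q_i-q_{\mathcal{W}},p_i-p_{\mathcal{W}})=0$, and identically for $G_{10}$. The self-class term $\mathcal{W}=[i]$ disappears on its own, since its argument is $(0,0)$ and $G_{01}(0,0)=G_{10}(0,0)=0$; this is exactly why the first odd-balanced condition excludes $\mathcal{W}=[i]$. An identical pairing of $\mathcal{W}$ with $-\mathcal{W}$ treats the fixed-class requirement: with $p_i=q_i=0$ and oddness it reduces to $\sum_{\mathcal{W}}\bigl(d_{\mathcal{W}}(i)-d_{-\mathcal{W}}(i)\bigr)G_{01}(-q_{\mathcal{W}},-p_{\mathcal{W}})=0$ (and the analogous $G_{10}$ identity for $\dot p_i$), the $\mathcal{W}=\mathcal{W}_0$ term again vanishing.

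This yields the ``if'' direction directly: when $\Tilde{\mathcal{V}}$ is odd-balanced every bracketed degree difference is zero, so both reduced identities hold and $\mathcal{A}$ is invariant; the same-class tangency $(\dot q_i,\dot p_i)=(\dot q_j,\dot p_j)$ for $[i]=[j]$ then follows from the first balance relation applied through a common node of the (nonempty) partner class. For the ``only if'' direction I would observe that the two reduced identities must hold at \emph{every} point of $\mathcal{A}$, that is, as the free class-states $(q_{\mathcal{W}},p_{\mathcal{W}})$ range over an open set; analyticity of $G_{01},G_{10}$ and the fact that distinct classes enter through independent shift variables then force each integer coefficient $d_{\mathcal{W}}(i)-d_{-\mathcal{W}}(j)$ (resp.\ $d_{\mathcal{W}}(i)-d_{-\mathcal{W}}(i)$) to vanish, which is precisely the odd-balanced condition.

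The step I expect to be the main obstacle is this last extraction: isolating the coefficients requires the shifted derivative functions $(q_i,p_i)\mapsto G_{01}(q_i-q_{\mathcal{W}},p_i-p_{\mathcal{W}})$ to be suitably independent across classes, and this breaks down for degenerate couplings (for instance $G_{01}\equiv0$), where $\mathcal{A}$ can be invariant without $\Tilde{\mathcal{V}}$ being balanced. Making the required non-degeneracy of $G$ precise — or, as in the cited work, reading the equivalence over the entire admissible class of odd-difference couplings — is where the real care is needed; the ``if'' computation itself is essentially mechanical once the oddness and the class-grouping are in place.
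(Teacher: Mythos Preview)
The paper does not prove this theorem: it is quoted from Neuberger, Sieben and Swift \cite{neuberger2019synchrony} and invoked as a black box, so there is no ``paper's own proof'' to compare against. Your proposal is therefore an independent reconstruction of the cited result.

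Your argument for the ``if'' direction is correct and is exactly the standard one: reduce invariance of the linear subspace to tangency of the vector field, group the coupling sums by class, and exploit that both the internal dynamics $(p_i,-\sin q_i)$ and the coupling functions $G_{01},G_{10}$ are odd so that the re-indexing $\mathcal{W}\mapsto-\mathcal{W}$ flips signs in the right way. The observation that the self-class term $\mathcal{W}=[i]$ drops out automatically because $G_{01}(0,0)=G_{10}(0,0)=0$ is the correct explanation for why that case is excluded from the balance conditions.

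You have also correctly located the only genuine issue, and it is not with your argument but with the statement as the paper records it. As written, the theorem refers to the \emph{specific} flow \eqref{eq:equations_motion} for a fixed analytic $G$; but then the ``only if'' direction is simply false in edge cases (take $G\equiv0$, where every $\mathcal{A}$ is invariant regardless of the graph). The theorem in \cite{neuberger2019synchrony} is stated for the whole class of odd-difference-coupled vector fields, i.e.\ $\mathcal{A}$ is invariant for \emph{every} admissible coupling if and only if the partition is odd-balanced; under that reading your coefficient-extraction step goes through trivially, since one may choose couplings that isolate each degree difference. Your final paragraph identifies this precisely, so there is no gap in your understanding --- only in the paper's transcription of the hypothesis.
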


\begin{figure}[ht]
\centering
\includegraphics[width=0.7\textwidth]{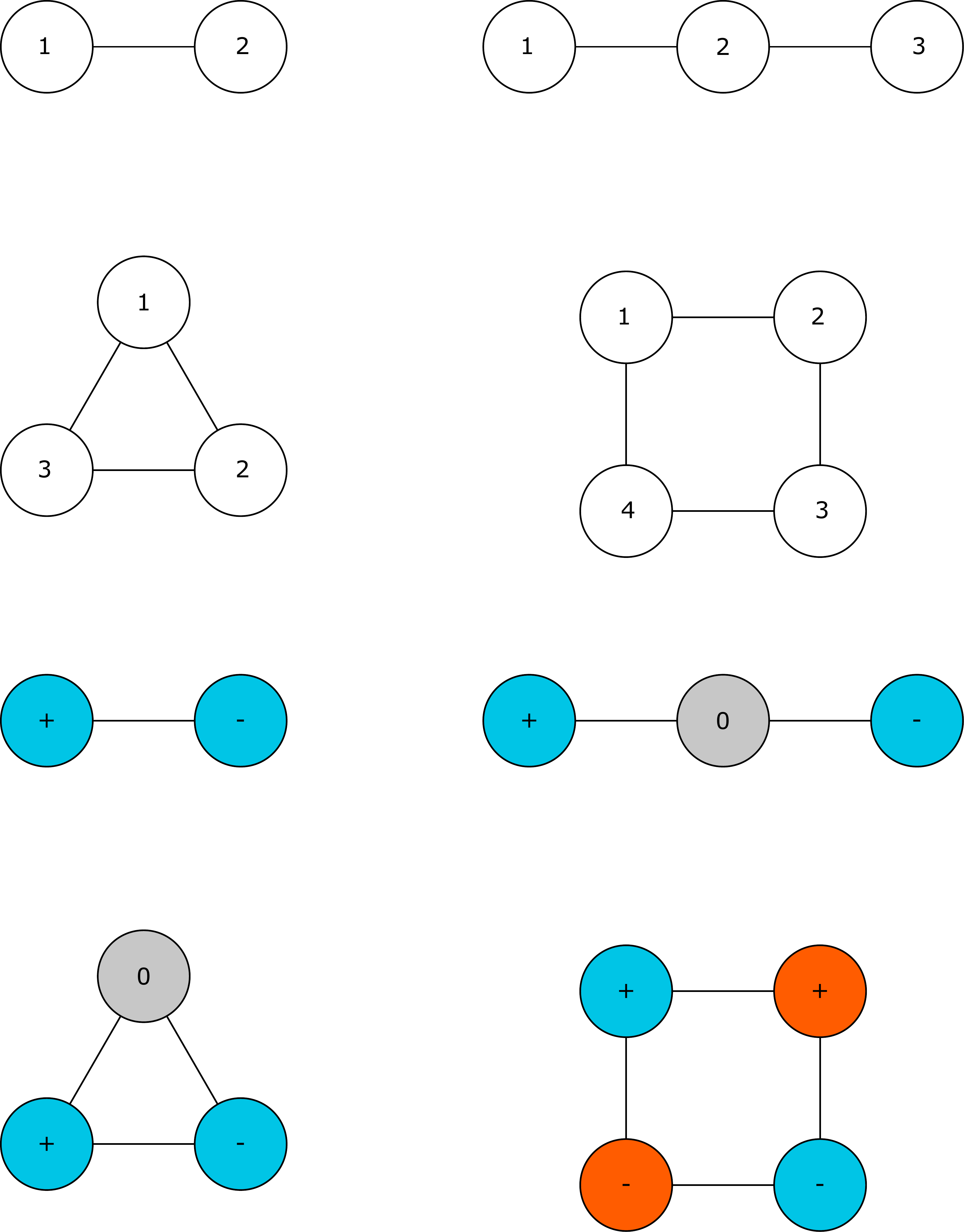}
\caption{We show four examples of odd-balanced partitions. In the upper part of the figure, we show the graphs with labelled vertices. In the lower part, we display a representation of the odd-balanced partitions. Different colours represent different equivalence classes up to a sign that appear as a label of the nodes. The special class fixed by the matching function is represented by the colour gray and the label $0$. For sake of clarity, we write down explicitly the odd-balanced partitions showed. Path graph with two vertices: $\Tilde{\mathcal{V}} = \{  \mathcal{W}_0=\{\}, \mathcal{W}_+=\{ 1\}, \mathcal{W}_-=\{ 2\}   \}$, path graph with three vertices: $\Tilde{\mathcal{V}} = \{  \mathcal{W}_0=\{2\}, \mathcal{W}_+=\{ 1\}, \mathcal{W}_-=\{ 3\}   \}$, complete graph with three vertices $\Tilde{\mathcal{V}} = \{  \mathcal{W}_0=\{1\}, \mathcal{W}_+=\{ 3\}, \mathcal{W}_-=\{ 2\}   \}$, cycle graph with four vertices: $\Tilde{\mathcal{V}} = \{  \mathcal{W}_0=\{\}, \mathcal{W}^\text{blue}_+=\{ 1 \}, \mathcal{W}^\text{orange}_+=\{ 2 \}, \mathcal{W}^\text{blue}_-=\{ 3\}, \mathcal{W}^\text{orange}_-=\{ 4 \}  \}$.
}
\label{fig:patterns}
\end{figure}


If the system is evolving in an invariant space $\mathcal{A}$ we say that it is in an anti-synchrony pattern. It is often useful to have global variables giving information about how `close' the system is to some specific regime, in this case anti-synchrony. We notice that since the coupled system is still conservative, it cannot converge to anti-synchrony patterns. However, we can measure how regularly and close to the pattern the system is evolving.

We define the \rev{equations for the centre of mass} as the two-dimensional system described by the coordinates $(\Bar{q},\Bar{p})$, where 
\begin{align}\label{eq:bar_qp}
    \Bar{q}&:=\frac{1}{N}\sum_{i=1}^N q_i , \\
    \Bar{p}&:=\frac{1}{N}\sum_{i=1}^N p_i .
\end{align}
We can derive the equations for the \rev{centre of mass} by summing up the components of \eqref{eq:equations_motion}. We notice that the tensor $a_{ij} G_{01/10}(q_i-q_j, p_i-p_j)$ is traceless and anti-symmetric, therefore $\sum_{i,j=1}^N a_{ij} G_{01/10}(q_i-q_j, p_i-p_j) = 0$. So, the equations of motion are
\begin{equation}\label{eq:average_system}
        \begin{aligned}
            \dot{\Bar{q}} &= \Bar{p} , \\
            \dot{\Bar{p}} &= - \frac{1}{N} \sum_{i=1}^N \sin{q_i} .
        \end{aligned}
    \end{equation}

\begin{Proposition}
\rev{
    Let $\mathcal{A}$ be an anti-synchrony space associated to an odd-balanced matched partition. The configuration induced by $\mathcal{A}$ is an equilibrium \rev{for the dynamics of the centre of mass} \eqref{eq:average_system}, i.e., $(\Bar{q},\Bar{p})|_\mathcal{A} = 0$ for all $t$.
    }
\end{Proposition}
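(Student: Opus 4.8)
The plan is to establish the two global coordinates vanish identically on $\mathcal{A}$, and then to promote this to a statement holding for all $t$ by invoking the invariance of $\mathcal{A}$. Concretely, I would first prove the pointwise claim that $\bar q = \bar p = 0$ at every $(q,p)\in\mathcal{A}$, by splitting the defining averages $\frac1N\sum_i q_i$ and $\frac1N\sum_i p_i$ according to the classes of the matched partition $\tilde{\mathcal{V}}$. Once $\bar q\equiv 0$ and $\bar p\equiv 0$ on $\mathcal{A}$, Theorem \ref{thm:odd} guarantees that a trajectory issued from $\mathcal{A}$ stays in $\mathcal{A}$, so the centre of mass remains pinned at the origin and $(\bar q,\bar p)|_\mathcal{A}=0$ for all $t$.

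For the pointwise claim I would organise the classes of $\tilde{\mathcal{V}}$ into the fixed class $\mathcal{W}_0$ and the genuine matched pairs $\{\mathcal{W},-\mathcal{W}\}$ with $\mathcal{W}\neq\mathcal{W}_0$. For the fixed class, any $i\in\mathcal{W}_0$ satisfies $[i]=\mathcal{W}_0=m(\mathcal{W}_0)=-[i]$, so the defining relation of $\mathcal{A}$ taken with $j=i$ forces $(q_i,p_i)=-(q_i,p_i)=0$; hence $\mathcal{W}_0$ contributes nothing to either average. For a matched pair (both classes nonempty, since $\emptyset$ is always a fixed point of $m$ and so cannot sit in a nontrivial pair), the relation $(q_i,p_i)=-(q_j,p_j)$ for all $i,j$ with $[i]=-[j]$ first forces synchrony inside each class — any two nodes of $\mathcal{W}$ equal the common negative of a fixed node of $-\mathcal{W}$ — and an exactly opposite common state across the pair, say $(Q_\mathcal{W},P_\mathcal{W})$ on $\mathcal{W}$ and $(-Q_\mathcal{W},-P_\mathcal{W})$ on $-\mathcal{W}$. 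Summing over the pair then gives a contribution proportional to $(|\mathcal{W}|-|-\mathcal{W}|)$, which collapses the two global sums to zero precisely when the matched classes carry the same number of nodes.

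The main obstacle is exactly this last point: the cancellation is exact only because matched classes of a matched partition have equal cardinality, a structural feature that must be extracted from the matched-partition data (and is realised at the node level by the $\mathbb{Z}_2\times\text{Aut}(\mathcal{G})$ action underlying Lemma \ref{lm:symmetry}) rather than from the odd-balanced degree conditions alone. I would therefore make this equal-size property explicit — it is inherent to the pairing $-\mathcal{W}:=m(\mathcal{W})$ and is visible in every example of Figure \ref{fig:patterns} — and then read off $\sum_i q_i=\sum_i p_i=0$, i.e. $\bar q=\bar p=0$ on $\mathcal{A}$. As a consistency check against the reduced dynamics \eqref{eq:average_system}, the same pairing together with the oddness of the sine yields $\sum_i \sin q_i=0$, so that $\dot{\bar p}=0$ and $\dot{\bar q}=\bar p=0$ along $\mathcal{A}$; thus the origin of the centre-of-mass system is an equilibrium and the configuration induced by $\mathcal{A}$ coincides with it, which completes the argument.
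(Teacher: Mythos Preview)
Your approach is essentially the same as the paper's: split the sum defining $(\bar q,\bar p)$ according to the classes of the matched partition, observe that nodes in the fixed class $\mathcal{W}_0$ have state zero, and cancel the contributions of each matched pair $\{\mathcal{W},-\mathcal{W}\}$. The paper's proof is considerably terser, simply asserting that ``by the definition of odd-balanced partition, if $n$ nodes are in the state $(q(t),p(t))$, then another $n$ nodes are in the state $(-q(t),-p(t))$'', and then noting the cancellation. You are right to flag the equal-cardinality property $|\mathcal{W}|=|-\mathcal{W}|$ as the point where the argument actually bites; the paper glosses over it in exactly the same way you do, treating it as part of the matched-partition data rather than deriving it from the stated degree conditions. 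Your additional consistency check via $\sum_i\sin q_i=0$ and the explicit invocation of Theorem~\ref{thm:odd} to propagate the identity in time are not in the paper's proof but are harmless reinforcements.
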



\begin{proof}
    \rev{It is straightforward to see that the nodes with zero state do not contribute to expression \eqref{eq:bar_qp} for $(\Bar{q},\Bar{p})$. The nodes we are left to consider are the ones that have non-trivial states. By the definition of odd-balanced partition, we have that if $n$ nodes are in the state $(q(t), p(t))$, then another $n$ nodes are in the state $(-q(t), -p(t))$ at all times. Then the sum of equal and opposite terms will cancel out.}
\end{proof}

\begin{Remark}
    We can rewrite \eqref{eq:average_system} as a second-order equation,
    \begin{equation}\label{eq:average_second_order}
        \ddot{\Bar{q}} = - \frac{1}{N} \sum_{i=1}^N \sin{q_i} ,
    \end{equation}
    only depending on the position coordinates, which in principle implicitly depend on $\kappa$.
\end{Remark}

So, we can use the \rev{centre of mass} to quantify how close the system is to anti-synchrony patterns by measuring how close to zero the evolution of $(\Bar{q},\Bar{p})$ is. We expect that the system will stay close to a pattern if the initial conditions are chosen close to regions of $\mathcal{A}$ that are orthogonally stable. However, the motion can still be complicated. We can measure the complexity of the motion close to a pattern by performing some Fourier analysis. 
\rev{From a practical point of view, we could simulate the system and then apply a Fourier transform to the time series. In such a way, we obtain a spectrum of frequencies representing the solution of the equations. The distribution and the amount of different frequencies reveal how complicated the motions is. The simplest periodic function is characterised by one frequency. More complicated periodic function, and also quasi-periodic functions, could show a large range of frequencies, revealing a higher complexity. The following proposition shows that the system regularises by increasing $\kappa$. In other words, for large coupling strength a simple periodic behaviour dominates the dynamics.}


\begin{Proposition}\label{prop:frequency}
    \rev{Let $\mu:=1/\kappa$. For $\mu \ll 1$ be sufficiently small the evolution of the centre of mass is equivalent to a perturbed harmonic oscillator with natural frequency $\omega=1$, i.e.  
    \begin{equation}
        \begin{aligned}
            \dot{\Bar{Q}} &= \Bar{P} , \\
            \dot{\Bar{P}} &= - \Bar{Q} + O\left(\mu\right) .
        \end{aligned}
\end{equation}
    }
\end{Proposition}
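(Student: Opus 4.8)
The plan is to exploit the fact, established just above, that the diffusive coupling cancels in the sums defining the centre of mass, so that $(\bar q,\bar p)$ obeys the \emph{exact} two-dimensional system \eqref{eq:average_system}, namely $\dot{\bar q}=\bar p$ and $\dot{\bar p}=-\tfrac1N\sum_i\sin q_i$. The entire $\kappa$-dependence then sits implicitly inside $\tfrac1N\sum_i\sin q_i$ through the trajectories $q_i(t)$, so the whole proposition reduces to showing that, for $\mu=1/\kappa$ small, $\tfrac1N\sum_i\sin q_i=\bar q+O(\mu)$; renaming $\bar Q=\bar q$ and $\bar P=\bar p$ then yields the claimed system.

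The mechanism I would use is energy conservation rather than a slow-manifold construction. Working in the bounded regime $H\le 2-N$ of Proposition \ref{prop:bounded_motion} and using $K\ge 0$ together with $\sum_i\cos q_i\le N$, the identity $\kappa G=H-K+\sum_i\cos q_i$ gives $0\le G\le 2\mu$. Since $G$ is the non-negative sum over edges of $c_{10}(q_i-q_j)^2+c_{01}(p_i-p_j)^2+(\text{higher order})$ coming from \eqref{eq:analytical_expansion}, a positive leading coefficient $c_{10}>0$ forces $|q_i-q_j|=O(\sqrt\mu)$ across every edge; propagating this along paths (using connectivity of $\mathcal G$) gives $u_i:=q_i-\bar q=O(\sqrt\mu)$ for all $i$, subject to the exact constraint $\sum_i u_i=0$. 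Crucially, because $H$ is conserved this bound holds uniformly in $t$, so no Fenichel-type normal hyperbolicity argument is needed — which is fortunate, since the underlying system is Hamiltonian and the fluctuation dynamics are only normally elliptic.

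With the fluctuations controlled I would Taylor-expand $\sin q_i=\sin(\bar q+u_i)=\sin\bar q\,\cos u_i+\cos\bar q\,\sin u_i$ and average over $i$. Using $\tfrac1N\sum_i\sin u_i=\tfrac1N\sum_i u_i+O(\mu)=O(\mu)$ (the linear term vanishes by $\sum_i u_i=0$, while $u_i^3=O(\mu^{3/2})$) and $\tfrac1N\sum_i\cos u_i=1+O(\mu)$, one obtains $\tfrac1N\sum_i\sin q_i=\sin\bar q+O(\mu)$, so the reduced dynamics is the single pendulum $\ddot{\bar q}=-\sin\bar q+O(\mu)$. Restricting to small-amplitude motion near the origin — the regime relevant for the bifurcation analysis — one replaces $\sin\bar q=\bar q+O(\bar q^3)$ and, after renaming $(\bar q,\bar p)=(\bar Q,\bar P)$, arrives at the perturbed harmonic oscillator with natural frequency $\omega=1$.

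The main obstacle is twofold. First, upgrading the fluctuation estimate from $O(\sqrt\mu)$ amplitude to an $O(\mu)$ error in the slow equation relies on the exact cancellation $\sum_i u_i=0$ of the linear term and on $c_{10}>0$; a degenerate coupling with $c_{10}=0$ (purely quartic, or momentum-only leading interaction) would alter the scaling and demand a separate argument. Second, the derivation honestly produces the \emph{nonlinear} pendulum $-\sin\bar q$, so recovering the stated linear restoring term $-\bar Q$ forces a small-oscillation restriction; making precise the amplitude window in which the $O(\bar q^3)$ pendulum nonlinearity is subdominant to the $O(\mu)$ coupling correction is the delicate point, and is where the hypothesis ``$\mu\ll 1$ sufficiently small'' must implicitly be paired with a near-origin assumption on $\bar q$.
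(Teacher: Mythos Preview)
Your argument is correct under the extra hypotheses you flag (positive leading coupling coefficient $c_{10}>0$ and the bounded-motion regime $H\le 2-N$), but it takes a genuinely different route from the paper's proof. The paper does \emph{not} use energy conservation or any fluctuation estimate at all: it simply introduces rescaled variables $Q_i:=\kappa q_i$, $P_i:=\kappa p_i$ (so that $\bar Q=\kappa\bar q$, $\bar P=\kappa\bar p$ are rescaled, not merely renamed), observes that the coupling still cancels in the averaged equations, and then Taylor-expands $\sin(\mu Q_i)=\mu Q_i+O(\mu^3)$ directly. Dividing by $\mu$ yields $\dot{\bar P}=-\bar Q+O(\mu^2)$ in one line, with no intermediate pendulum step and no need to control $q_i-\bar q$.

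The trade-off is instructive. The paper's rescaling argument is much shorter and imposes no sign condition on $c_{10}$, but it is essentially formal: the $O(\mu)$ remainder is honest only if the $Q_i$ stay bounded uniformly in $\mu$, which amounts to the implicit near-origin assumption $q_i=O(\mu)$ that you rightly identify as delicate. Your energy-based argument, by contrast, earns the smallness of the fluctuations $q_i-\bar q$ dynamically and yields the stronger intermediate statement $\tfrac1N\sum_i\sin q_i=\sin\bar q+O(\mu)$ valid throughout the oscillatory regime; the price is the extra hypothesis $c_{10}>0$ and the explicit two-step structure (first reduce to a single pendulum, then linearise). Both approaches ultimately need a small-amplitude caveat to replace $\sin$ by the linear term, but only yours makes that caveat visible.
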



\begin{proof}
    Let us start by looking at equations \eqref{eq:equations_motion}, we perform the coordinate rescaling $Q_i:=\kappa q_i$, $P_i:=\kappa p_i$. Then, taking into account the definition $\mu:=1/\kappa$, the vector field becomes
\begin{equation}\label{eq:system_new}
    \begin{aligned}
       \mu \dot{Q}_i  &= \mu P_i  + \frac{1}{\mu} \sum_{j=1}^N a_{ij} G_{01}\left(\mu \left(Q_i-Q_j\right),\mu \left(P_i-P_j\right)\right) , \\
       \mu \dot{P}_i  &= - \sin{\left(\mu Q_i\right)} - \frac{1}{\mu} \sum_{j=1}^N a_{ij} G_{10}\left(\mu \left(Q_i-Q_j\right),\mu \left(P_i-P_j\right)\right) .
    \end{aligned} 
\end{equation}
Now, let us take the average of \eqref{eq:system_new},
\begin{equation}
        \begin{aligned}
            \dot{\Bar{Q}} &= \Bar{P} , \\
            \dot{\Bar{P}} &= -  \frac{1}{\mu N} \sum_{i=1}^N \sin{\left(\mu Q_i\right)} ,
        \end{aligned}
    \end{equation}
where $\Bar{Q}:=\frac{1}{N}\sum_{i=1}^N Q_i$, and $\Bar{P}:=\frac{1}{N}\sum_{i=1}^N P_i$. For $\mu \ll 1$ we can expand the sine function obtaining
\begin{equation}
        \begin{aligned}
            \dot{\Bar{Q}} &= \Bar{P} , \\
            \dot{\Bar{P}} &= - \Bar{Q} + O\left(\mu\right) .
        \end{aligned}
\end{equation}
So, up to order $\mu=1/\kappa$ the system behaves as an harmonic oscillator with frequency $\omega=1$. Notice that the coordinate transformation we performed does not affect the frequency of the original system but only the amplitude.
\end{proof}
A word of care about the averaging process \rev{that leads to the centre of mass coordinates}. We notice that a large coupling strength is a \emph{sufficient} condition for regular behaviour. However, regularity could be explained also by other factors, e.g., evolution sufficiently close to a elliptic equilibrium. Also, since the statement of Proposition \ref{prop:frequency} is quantitative, small but qualitatively relevant features, such as weak chaos, might be not measurable by means of the \rev{centre of mass motion}.

The last part of this section is dedicated to the analysis of synchrony, and the equilibrium at the origin. First we notice that complete synchrony, $\mathcal{S}:=\{ (q,p) \ | \ q_1 = \dots =q_N \ \text{and} \ p_1 = \dots = p_N  \}$, is always an invariant space for the system \eqref{eq:equations_motion}, such a claim can be easily checked by direct inspection of \eqref{eq:equations_motion}. In fact, the fully synchronised system behaves as a simple pendulum; implying that bifurcations of synchronous equilibria can only happen transversely to the synchrony space. The following proposition shows that the eigenvalues associated to synchrony can only be purely real or purely imaginary, forbidding Hamiltonian-Hopf bifurcations \cite{van2006hamiltonian, van1990hamiltonian, chossat2002hamiltonian}, where for some values of the parameter the eigenvalues take complex values, i.e, $\Re(\lambda)\neq0$, $\Im(\lambda)\neq0$.  

\begin{Proposition}\label{prop:synch_eig}
    The eigenvalues associated to synchronous states $(q^\text{synch},p^\text{synch}) \in \mathcal{S}$ are
    \begin{equation}\label{eq:eig}
      \pm  \sqrt{-     \left(  1 + 2 \kappa c_{01} \lambda  \right) \left( \cos{q^\text{synch}}  + 2 \kappa c_{10} \lambda \right)} ,
    \end{equation}
    where $\lambda \in \text{Spec}(L)$, and $L$ is the corresponding graph Laplacian.
\end{Proposition}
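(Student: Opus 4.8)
The plan is to linearise the full vector field (\ref{eq:equations_motion}) at a synchronous equilibrium and exploit the fact that, on the synchrony space $\mathcal{S}$, all position differences $q_i-q_j$ and momentum differences $p_i-p_j$ vanish. This is the key simplification: the coupling terms are built from $G_{01}$ and $G_{10}$ evaluated at difference variables, so their \emph{values} vanish on $\mathcal{S}$, but their \emph{derivatives} do not, and it is precisely these derivatives that produce the Laplacian structure in the linearisation.

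\emph{First} I would compute the Jacobian of (\ref{eq:equations_motion}). Differentiating $\dot q_i$ and $\dot p_i$ with respect to $q_k$ and $p_k$ produces four $N\times N$ blocks. When I evaluate at a point of $\mathcal{S}$, the arguments of every $G$-derivative are $(0,0)$, so by the analytic expansion (\ref{eq:analytical_expansion}) only the lowest-order terms survive: $G_{01}$ and $G_{10}$ themselves vanish (they are odd in their arguments, carrying factors $x$ or $y$), while the relevant second derivatives evaluated at the origin reduce to the constants $c_{01}$ and $c_{10}$ up to numerical factors. Concretely, the $\partial \dot q_i/\partial p_k$ block becomes $\id + 2\kappa c_{01} L$ and the $\partial \dot p_i/\partial q_k$ block becomes $-(\cos q^{\text{synch}}\,\id + 2\kappa c_{10} L)$, where $L$ is the graph Laplacian arising from the pattern $\sum_j a_{ij}(\,\cdot_i - \cdot_j)$; the off-diagonal-in-$(q,p)$ mixed blocks vanish because $G_{11}$ contributes an $xy$ factor that is zero at the origin. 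Care is needed to extract the correct constants: since $G(x,y)=\sum c_{lm}x^{2l}y^{2m}$, the second partials $G_{02}$ and $G_{20}$ at the origin equal $2c_{01}$ and $2c_{10}$ respectively, which is where the factor $2$ in (\ref{eq:eig}) originates.

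\emph{Next} I would diagonalise. Because $L$ is symmetric it admits an orthonormal eigenbasis; writing the Jacobian in block form with the two diagonal blocks being polynomials in $L$ and the other two blocks zero, I can simultaneously decompose the $2N$-dimensional linearisation into $N$ decoupled $2\times 2$ systems indexed by $\lambda\in\spec(L)$. Each such block has the form $\begin{pmatrix} 0 & 1+2\kappa c_{01}\lambda \\ -(\cos q^{\text{synch}}+2\kappa c_{10}\lambda) & 0\end{pmatrix}$, whose characteristic polynomial is $\mu^2 + (1+2\kappa c_{01}\lambda)(\cos q^{\text{synch}}+2\kappa c_{10}\lambda)=0$, immediately yielding the stated eigenvalues (\ref{eq:eig}).

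\emph{The main obstacle} I anticipate is bookkeeping rather than conceptual: correctly collecting the derivative contributions so that every coupling block reduces to the same Laplacian $L$ with the right scalar coefficients, and verifying that the mixed $q$-$p$ blocks genuinely vanish on $\mathcal{S}$. Once the $2\times 2$ reduction is in place, the square-root form of the eigenvalues is automatic, and the observation that the product inside the square root is real forces the eigenvalues to be either purely real or purely imaginary, which is the dynamical content ruling out Hamiltonian--Hopf bifurcations.
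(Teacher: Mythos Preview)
Your proposal is correct and follows essentially the same approach as the paper: compute the Jacobian, observe that on $\mathcal{S}$ the mixed blocks vanish (since $G_{11}(0,0)=0$) while the remaining blocks reduce to $\mathbb{I}+2\kappa c_{01}L$ and $-(\cos q^{\text{synch}}\,\mathbb{I}+2\kappa c_{10}L)$, and then diagonalise via the eigenbasis of $L$. The only cosmetic difference is that the paper passes through the block-determinant formula for $\det(J-\mu\mathbb{I})$ before diagonalising the resulting $N\times N$ product, whereas you decompose directly into $N$ decoupled $2\times 2$ blocks; both routes are equivalent and yield the same eigenvalue expression.
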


\begin{proof}
    We start by computing the Jacobian of the vector field \eqref{eq:equations_motion},
    \begin{equation}
        J=\begin{pmatrix}
            \frac{\partial\dot{q}}{\partial q} &  \frac{\partial\dot{q}}{\partial p} \\
             \frac{\partial\dot{p}}{\partial q} &  \frac{\partial\dot{p}}{\partial p} 
        \end{pmatrix} .
    \end{equation}
Evaluating the various components we get
\begin{align}
      \frac{\partial\dot{q}}{\partial q} &=  \kappa L^{(11)} ,\\ 
      \frac{\partial\dot{q}}{\partial p} &= \mathbb{I} + \kappa L^{(02)} , \\
      \frac{\partial\dot{p}}{\partial q} &= - \text{diag}\left(\text{Cos}(q)\right) - \kappa L^{(20)} ,\\
      \frac{\partial\dot{p}}{\partial p} &= - \kappa L^{(11)} ,
\end{align}
where $\mathbb{I}$ is the $N$-dimensional identity matrix, $\text{Cos}(q) = (\cos{q_1}, \dots, \cos{q_N})^\intercal$, and the tensors $L^{(11)}$, $ L^{(20)}$, $ L^{(02)}$ are weighted Laplacians of the graph $\mathcal{G}$, 
\begin{equation}
    L_{ij}^{(nm)}=
    \begin{cases}
    -w_{ij}^{(nm)}, & i\neq j\\
    \sum_{j=1 , j\neq i}^N w_{ij}^{(nm)}, & j=i
    \end{cases}
\end{equation}
where $n,m \in \mathbb{N}$ such that $n+m=2$, and the weights depend on position and momenta as follows
\begin{align}
    w_{ij}^{(nm)} &:= G_{nm}(q_i-q_j,p_i-p_j) .
\end{align}
Notice that, thanks to the form of the interaction $G$, the weights are symmetric. Now, we restrict the Jacobian to the synchrony space, which leads to the weights
\begin{align}
    w_{ij}^{(11)} &:= G_{11}(0,0) = 0 , \\ 
    w_{ij}^{(20)} &:= G_{20}(0,0) = 2 c_{10} , \\
    w_{ij}^{(02)} &:= G_{02}(0,0) = 2 c_{01} .
\end{align}
So the Jacobian can be written as 
    \begin{equation}
        J=\begin{pmatrix}
            0 &  \mathbb{I} + 2 \kappa c_{01} L \\
             - \cos{q^\text{synch}} \mathbb{I} - 2 \kappa c_{10} L &  0 
        \end{pmatrix} ,
    \end{equation}
where $L$ is the graph Laplacian and $q^\text{synch}:=q_1 = \dots =q_N$. In order to compute the eigenvalues we need to solve the equation
 \begin{equation}
        \det \begin{pmatrix}
            - \lambda \mathbb{I} &  \mathbb{I} + 2 \kappa c_{01} L \\
             - \cos{q^\text{synch}} \mathbb{I} - 2 \kappa c_{10} L &  - \lambda \mathbb{I} 
        \end{pmatrix} = 0 ,
    \end{equation}
which, thanks to the properties of block matrices \cite{silvester_2000}, can be reduced to
\begin{equation}
    \det\left(\lambda^2 \mathbb{I} - \left(  \mathbb{I} + 2 \kappa c_{01} L  \right) \right(- \cos{q^\text{synch}} \mathbb{I} - 2 \kappa c_{10} L\left) \right) =0 .
\end{equation}
So, the eigenvalues are plus/minus the square roots of the eigenvalues of the matrix
\begin{equation}\label{eq:mat}
-     \left(  \mathbb{I} + 2 \kappa c_{01} L  \right) \left( \cos{q^\text{synch}} \mathbb{I} + 2 \kappa c_{10} L\right) .
\end{equation}
Notice that every term of the matrix \eqref{eq:mat} is a symmetric matrix, therefore the eigenvalues of \eqref{eq:mat} are real. As a consequence the eigenvalues of $J$ are square roots of real numbers that can either lead to real or imaginary numbers. Let $S$ be the similarity matrix taking $L$ to the Jordan form $D$, i.e., $S^{-1}LS=D=\text{diag}\left((\lambda_1, \dots, \lambda_N)^\intercal\right)$, where the strictly diagonal structure is guaranteed since $L$ is symmetric. We notice that $S$ diagonalises \eqref{eq:mat},
\begin{align}
&-   S^{-1}  \left(  \mathbb{I} + 2 \kappa c_{01} L  \right) \left( \cos{q^\text{synch}} \mathbb{I} + 2 \kappa c_{10} L\right) S , \\
&-   S^{-1}  \left(  \mathbb{I} + 2 \kappa c_{01} L  \right) S S^{-1}\left( \cos{q^\text{synch}} \mathbb{I} + 2 \kappa c_{10} L\right) S , \\
&-    \left(  \mathbb{I} + 2 \kappa c_{01} D  \right) \left( \cos{q^\text{synch}} \mathbb{I} + 2 \kappa c_{10} D \right) .
\end{align}
Now \eqref{eq:mat} is in diagonal form and the statement follows.
\end{proof}

\begin{Remark}
    For connected graphs, $0$ is a simple eigenvalue of $L$, so we can also rewrite the expression \eqref{eq:eig} as
     \begin{equation}\label{eq:eig_simp}
      \left\{ \pm  \sqrt{-     \cos{q^\text{synch}}}, \pm  \sqrt{-     \left(  1 + 2 \kappa c_{01} \lambda_l  \right) \left( \cos{q^\text{synch}}  + 2 \kappa c_{10} \lambda_l \right)} , \ l=2,\dots, N \right\} .
    \end{equation}
    For the equilibrium at the origin the eigenvalues reduce to 
    \begin{equation}\label{eq:eig_or}
      \left\{ \pm  i , \pm  \sqrt{-     \left(  1 + 2 \kappa c_{01} \lambda_l  \right) \left( 1  + 2 \kappa c_{10} \lambda_l \right)} , \ l=2,\dots, N \right\} .
    \end{equation}
\end{Remark}

We recall that the equilibria of the uncoupled system can be classified in three main classes depending on the combinations of the individual equilibria of the pendula, \emph{elliptic}: combinations of centers, \emph{saddle}: combinations of saddles, and \emph{mixed}: combinations of centers and saddles. Since the coupling does not produce new classes of equilibria, we use the same nomenclature for the equilibria of the coupled system.

We already mentioned that the synchronised system behaves as a simple pendulum. As a consequence there cannot be bifurcations happening in the synchrony space. Such a property can also be verified by considering the eigenvalues \eqref{eq:eig_simp}, indeed the first eigenvalue does not depend on $\kappa$ and the eigenvector associated to it is the vector with all components equal to one, i.e., $\mathbf{1}=(1, \dots,1)^\intercal$. Actually, it is possible to know the eigenvector of each eigenvalue, as they are the same of the graph Laplacian, since the matrix diagonalising $L$ also diagonalises the Jacobian of the synchronised system. It is quite remarkable that some of the eigenvectors of the Laplacian actually belong to anti-synchrony spaces defined by odd-balanced partitions. In particular for the origin it means that the bifurcation takes place in an anti-synchrony invariant space.

\begin{Lemma}\label{lm:bif_origin}
    Let $c_{10/01} < 0$, then the origin undergoes a bifurcation elliptic to mixed, or vice versa, for
    \begin{equation}\label{eq:crit_origin}
       \kappa=\kappa^\textup{crit} := - \frac{1}{2c_{10/01} \lambda_l} ,
    \end{equation}
    $l \in \{2, \dots, N\}$. The bifurcation takes place in a centre manifold \cite{carr2012applications, vanderbauwhede1989centre} that at the origin is tangent to the vector $({v_l}_1, \dots, {v_l}_N  ,{v_l}_1, \dots, {v_l}_N)^\intercal$, where $v_l$ is the eigenvector of $L$ with eigenvalue $\lambda_l$.
\end{Lemma}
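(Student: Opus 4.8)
The plan is to read off everything from the spectrum of the linearisation at the origin, which Proposition~\ref{prop:synch_eig} and its Remark already supply in closed form through \eqref{eq:eig_or}. With $q^{\textup{synch}}=0$ the non-synchrony eigenvalues are $\pm\sqrt{f_l(\kappa)}$, where
\begin{equation}
  f_l(\kappa):=-\left(1+2\kappa c_{01}\lambda_l\right)\left(1+2\kappa c_{10}\lambda_l\right),\qquad l=2,\dots,N,
\end{equation}
while the synchrony pair stays pinned at $\pm i$ for every $\kappa$. The first step is a classification dictionary: the $l$-th pair is purely imaginary (an elliptic/centre direction) exactly when $f_l(\kappa)<0$, and purely real (a saddle direction) exactly when $f_l(\kappa)>0$; by Proposition~\ref{prop:synch_eig} no other possibility occurs, so the only way a pair can change character is by passing through $0$. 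A change of sign of some $f_l$ therefore marks a transition of the origin between the elliptic type (all pairs imaginary) and the mixed type (at least one real pair coexisting with the ever-present imaginary synchrony pair).

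Next I would locate and qualify these transitions. As a polynomial in $\kappa$, $f_l$ has the two roots $\kappa=-1/(2c_{01}\lambda_l)$ and $\kappa=-1/(2c_{10}\lambda_l)$, which are exactly the values collected in \eqref{eq:crit_origin} under the shorthand $c_{10/01}$. Since $\lambda_l>0$ for $l\ge 2$ (the Laplacian is positive semidefinite with a simple zero eigenvalue on a connected graph) and $c_{10/01}<0$ by hypothesis, each $\kappa^{\textup{crit}}$ is positive, hence attained in the admissible range of the coupling. The two roots are simple whenever $c_{01}\neq c_{10}$, so $f_l$ changes sign transversally at each of them ($f_l'(\kappa^{\textup{crit}})\neq 0$); the corresponding eigenvalue pair then crosses $0$ with nonzero speed and a genuine bifurcation occurs, and a sign check (e.g. $f_l(0)=-1<0$) confirms that the origin passes from elliptic to mixed and back, giving the stated ``elliptic to mixed, or vice versa''.

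To pin down where the bifurcation lives, I would reuse the block structure exposed in the proof of Proposition~\ref{prop:synch_eig}: the matrix $S$ diagonalising $L$ simultaneously reduces $J$ to $N$ decoupled $2\times2$ Hamiltonian blocks, the $l$-th being $\bigl(\begin{smallmatrix}0 & 1+2\kappa c_{01}\lambda_l\\ -(1+2\kappa c_{10}\lambda_l) & 0\end{smallmatrix}\bigr)$ acting on the coordinates carried by the Laplacian eigenvector $v_l$. At $\kappa=\kappa^{\textup{crit}}$ one factor vanishes and this block becomes nilpotent, a single $2\times2$ Jordan block for the eigenvalue $0$. Consequently the generalised kernel of $J$ for the double-zero eigenvalue is the whole $l$-th block, namely
\begin{equation}
  E^{c}_l=\bigl\{(\alpha v_l,\beta v_l)\ :\ \alpha,\beta\in\R\bigr\}=\Span\bigl\{(v_l,0)^{\intercal},(0,v_l)^{\intercal}\bigr\},
\end{equation}
whose spatial profile in both the position and momentum slots is $v_l$; in particular it contains the diagonal vector $({v_l}_1,\dots,{v_l}_N,{v_l}_1,\dots,{v_l}_N)^{\intercal}$. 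The cited centre manifold theorem then furnishes a locally invariant centre manifold tangent at the origin to $E^c_l$, which is the claim.

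The main obstacle is legitimising the reduction to this two-dimensional manifold. At $\kappa^{\textup{crit}}$ the origin is highly non-hyperbolic: besides the bifurcating double zero, the synchrony pair $\pm i$ and every other pair with $f_m(\kappa^{\textup{crit}})<0$ also sit on the imaginary axis, so the centre subspace in the strict sense is much larger than $E^c_l$ and the standard splitting into hyperbolic and central parts is unavailable. The way around this is precisely the simultaneous block-diagonalisation above: at the linear level the $l$-th mode is decoupled from all others, and one argues that the $v_l$-mode spans an invariant subspace for the full nonlinear flow as well (this is the anti-synchrony/eigenvector correspondence exploited elsewhere in the paper), so that the bifurcation can be analysed on the two-dimensional invariant set where the remaining imaginary directions are simply absent. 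Verifying this nonlinear invariance, rather than the elementary spectral bookkeeping, is where the real care is needed.
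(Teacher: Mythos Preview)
The paper states this Lemma without proof, treating it as an immediate corollary of the eigenvalue formula \eqref{eq:eig_or} and the block-diagonalisation carried out in the proof of Proposition~\ref{prop:synch_eig}. Your proposal is precisely the unpacking of that implication: you read off the zeros of $f_l(\kappa)$, verify positivity of $\kappa^{\textup{crit}}$ under $c_{10/01}<0$, check the transversal sign change, and identify the relevant eigenspace via the simultaneous diagonalisation by $S$. This is correct and is exactly the argument the paper is tacitly invoking.

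Your closing caveat is well taken and, in fact, sharper than the paper's own phrasing. At $\kappa^{\textup{crit}}$ the standard centre subspace contains not only the nilpotent $l$-th block but also the synchrony pair $\pm i$ and every other elliptic block, so a \emph{centre manifold} in the textbook sense is not two-dimensional. The paper's intended meaning is clarified only later, through Proposition~\ref{prop:odd_iff} and the reduction to the one-degree-of-freedom system \eqref{eq:reduced}: when $v_l=v_{\mathcal{B}/\mathcal{T}}$ the span of $(v_l,0)$ and $(0,v_l)$ is a genuine nonlinear invariant subspace (the anti-synchrony space), and the bifurcation is analysed there rather than on a centre manifold in the strict sense. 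Your diagnosis that the real work lies in that nonlinear invariance, not in the spectral bookkeeping, is exactly right.
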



Let us consider bivalent and trivalent graphs, i.e., graphs having at least one eigenvector with components in $\mathcal{B}=\{-1,1\}$ or $\mathcal{T}=\{-1,0,1\}$ respectively; we call such eigenvectors $v_\mathcal{B}$ and $v_\mathcal{T}$. For a classification of bivalent and trivalent graphs and their properties see \cite{caputo2019graph, caputo2023eigenvectors, alencar2023graphs}.


\begin{Proposition}\label{prop:odd_iff}
    A graph admits an odd-balanced partition if and only if it is bivalent or trivalent. 
\end{Proposition}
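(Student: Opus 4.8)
The plan is to recast the purely combinatorial odd-balance conditions as a statement about $\{-1,0,1\}$-valued eigenvectors of the graph Laplacian $L$. To each matched partition $\Tilde{\mathcal{V}}$ with matched pairs $(\mathcal{W}_k,-\mathcal{W}_k)$, $k=1,\dots,r$, and zero class $\mathcal{W}_0$, I associate the \emph{matched-difference vectors} $u_k:=\chi_{\mathcal{W}_k}-\chi_{-\mathcal{W}_k}\in\{-1,0,1\}^N$, where $\chi_{\mathcal{W}}$ is the indicator of $\mathcal{W}$, together with the subspace $V:=\Span\{u_1,\dots,u_r\}$. This $V$ is exactly the projection to the $q$-coordinates of the anti-synchrony space $\mathcal{A}$, and each $u_k$ has entries in $\mathcal{T}=\{-1,0,1\}$ (in $\mathcal{B}=\{-1,1\}$ when $\mathcal{W}_0=\emptyset$). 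The whole argument hinges on linking these sign vectors to eigenvectors of $L$ lying in the anti-synchrony space, the exact correspondence being cleanest for a single matched pair.

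For the direction ``odd-balanced $\Rightarrow$ bivalent or trivalent'' I would first show that the balance conditions \emph{imply} $V$ is $L$-invariant. Writing $(Lu_k)_i=d_i(u_k)_i-\big(d_{\mathcal{W}_k}(i)-d_{-\mathcal{W}_k}(i)\big)$ and splitting according to the class of $i$, the first balance condition forces every cross-degree $d_{\mathcal{W}}(i)$ with $\mathcal{W}\neq[i]$ to be constant on each class and to flip under the matching, while the second condition kills the contribution on $\mathcal{W}_0$; together these give $Lu_k\in V$. The decisive case is a single matched pair ($r=1$, a three-part partition): then $V=\Span\{u_1\}$ is one-dimensional and $L$-invariant, so $u_1$ is \emph{itself} an eigenvector of $L$ with entries in $\{-1,0,1\}$, i.e.\ the graph is bivalent or trivalent. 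The remaining work is to reduce a general odd-balanced partition to this situation, producing a genuine sign eigenvector rather than an arbitrary element of $V$.

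For the converse ``bivalent or trivalent $\Rightarrow$ odd-balanced'' I would start from a sign eigenvector $v$, $Lv=\lambda v$, and set $\mathcal{W}_+=v^{-1}(1)$, $\mathcal{W}_-=v^{-1}(-1)$, $\mathcal{W}_0=v^{-1}(0)$ with the sign-flip matching. Evaluating $Lv=\lambda v$ on $\mathcal{W}_0$ immediately yields $d_{\mathcal{W}_+}(i)=d_{\mathcal{W}_-}(i)$, which is the second balance condition, and on $\mathcal{W}_\pm$ it yields $2d_{\mathcal{W}_\mp}(i)+d_{\mathcal{W}_0}(i)=\lambda$. Here lies the subtlety: the eigenvalue equation only controls this \emph{weighted sum} of cross-degrees, whereas the first balance condition demands the stronger entrywise equalities $d_{\mathcal{W}_-}(i)=d_{\mathcal{W}_+}(j)$ and $d_{\mathcal{W}_0}(i)=d_{\mathcal{W}_0}(j)$. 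So the naive sign partition of $v$ need not be odd-balanced, and one must instead exhibit a possibly different odd-balanced partition, for instance by refining $\mathcal{W}_\pm$ according to their degree profiles, or by locating a pair of interchangeable vertices $a,b$ (an order-two automorphism transposing $a$ and $b$): then $e_a-e_b$ is automatically a $\{-1,0,1\}$ eigenvector and $\{\{a\},\{b\},\mathcal{V}\setminus\{a,b\}\}$ is a three-part odd-balanced partition.

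The step I expect to be the main obstacle is precisely this gap between the weighted-sum constraint produced by $Lv=\lambda v$ and the entrywise degree constraints demanded by odd-balance. Bridging it cleanly means reducing both implications to the exact single-pair correspondence of the second paragraph: on one side, showing that the existence of any odd-balanced partition forces the existence of a three-part one, equivalently that the invariant subspace $V$ contains a $\{-1,0,1\}$ eigenvector, which one can try to extract by diagonalising the symmetric operator $L|_V$ and reading a sign vector off its spectral structure; on the other side, showing that a sign eigenvector forces enough local symmetry of $\mathcal{G}$ to produce such an interchangeable pair. To close this reduction I would lean on the structural classification of bivalent and trivalent graphs in \cite{caputo2019graph, caputo2023eigenvectors, alencar2023graphs} together with Theorem \ref{thm:odd}.
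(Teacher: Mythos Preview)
Your proposal correctly isolates the linear-algebraic skeleton, but leaves both directions with the gaps you yourself flag, and the paper closes them by a route you do not anticipate: it uses Theorem~\ref{thm:odd} (flow invariance $\Leftrightarrow$ odd-balance) as the working tool in \emph{both} directions rather than arguing directly with $L$. For ``bivalent/trivalent $\Rightarrow$ odd-balanced'' the paper makes no attempt to upgrade the weighted-sum constraint $2d_{\mathcal{W}_\mp}(i)+d_{\mathcal{W}_0}(i)=\lambda$ to the entrywise degree equalities combinatorially. Instead it restricts the full \emph{nonlinear} odd-difference-coupled vector field to the line $\mathrm{span}(v_{\mathcal{B}/\mathcal{T}})$, invokes the structural results of \cite{caputo2019graph} to assert that the cross-degrees $d_\mathcal{N},d_\mathcal{P},d_\mathcal{Z}$ are constant on each sign class, concludes that this line is flow-invariant, and then reads off odd-balance from Theorem~\ref{thm:odd}. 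Your search for an interchangeable vertex pair or an automorphism is neither used nor needed.

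For ``odd-balanced $\Rightarrow$ bivalent/trivalent'' your proposed reduction to $r=1$ via diagonalising $L|_V$ does not close the gap as stated: an eigenvector of $L$ lying in $V=\mathrm{span}\{u_1,\dots,u_r\}$ is a generic combination $\sum_k\alpha_k u_k$ and has no reason to take values in $\{-1,0,1\}$. The paper's extra ingredient here is a rigidity lemma for nonlinear odd functions: the only affine maps $x\mapsto\alpha x+\beta$ satisfying $g(\alpha x+\beta)=\alpha g(x)+\beta$ for \emph{every} analytic nonlinear odd $g$ are $\alpha=\pm1$, $\beta=0$. Since, by Theorem~\ref{thm:odd}, the linear space $\mathcal{A}$ is invariant not merely under the linearisation at the origin but under all such nonlinear flows, the paper uses this lemma to argue that the relations cutting out $\mathcal{A}$ force the Laplacian eigenvector sitting inside $\mathcal{A}$ to be a sign vector $v_{\mathcal{B}/\mathcal{T}}$. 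This sign-rigidity step, exploiting invariance under the whole family of odd nonlinearities rather than a single linear operator, is the idea missing from your outline.
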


\begin{proof}
    We start by assuming that the graph is bivalent or trivalent. Then, by the definition of $v_{\mathcal{B}/\mathcal{T}}$, the odd-balanced relations we are considering are the ones with only one letter and, possibly, zeros. The pattern is identified by assigning to the node $i$ the $i$-th component of $v_{\mathcal{B}/\mathcal{T}}$. We can check that the eigenspace associated to $v_{\mathcal{B}/\mathcal{T}}$ is invariant. Let us consider the odd-difference coupled vector field \cite{neuberger2019synchrony},
    \begin{equation}\label{eq:odd_difference_vf}
        f_i(x)=g(x_i)+ \sum_{j=1}^N a_{ij} h(x_i - x_j) ,
    \end{equation}
    where $g$ and $h$ are analytic odd functions. We split the set of nodes in three sub-sets $\mathcal{P}=\{ i\in \mathcal{V} \ | \ v_{\mathcal{B}/\mathcal{T}}^i =1 \}$, $\mathcal{N}=\{ i\in \mathcal{V} \ | \ v_{\mathcal{B}/\mathcal{T}}^i =-1 \}$, $\mathcal{Z}=\{ i\in \mathcal{V} \ | \ v_{\mathcal{B}/\mathcal{T}}^i =0 \}$. One can notice that the sub-sets of nodes we introduced can be associated to an odd-balanced partition. In particular, the nodes in $\mathcal{P}$ and $\mathcal{N}$ are in relative anti-synchrony. Restricting the vector field along $v_{\mathcal{B}/\mathcal{T}}$, we obtain
    \begin{align}
        f_{i \in \mathcal{P}}(s \cdot v_{\mathcal{B}/\mathcal{T}})&=g(s)+ d_\mathcal{N} h(2s) + d_\mathcal{Z} h(s) , \\
        f_{i \in \mathcal{N}}(s \cdot  v_{\mathcal{B}/\mathcal{T}})&=-g(s)- d_\mathcal{P} h(2s) - d_\mathcal{Z} h(s) , \\
        f_{i \in \mathcal{Z}}(s \cdot v_{\mathcal{B}/\mathcal{T}})&=0, 
    \end{align}
    where $s \in \mathbb{R}$, $d_{\mathcal{N}/\mathcal{P}}:= \sum_{j \in \mathcal{N}/\mathcal{P}} a_{ij}$ is the number of negative/positive nodes connected to a positive/negative node, and similarly $d_\mathcal{Z}:= \sum_{j \in \mathcal{Z}} a_{ij}$ is the number of zero nodes connected to a non-zero node. As a consequence of the results in \cite{caputo2019graph}, we have that the quantities just defined do not depend on the specific node but just on the classes $\mathcal{P}$, $\mathcal{N}$, $\mathcal{Z}$ and therefore the space induced by $v_{\mathcal{B}/\mathcal{T}}$ is invariant. So, thanks to Theorem \ref{thm:odd}, we have that $v_{\mathcal{B}/\mathcal{T}}$ defines an odd balanced partition.

    Now we prove the other direction. We start with a graph admitting an odd-balanced partition. It is straightforward to check that also for the vector field \eqref{eq:odd_difference_vf}, similarly to what we already have seen for the coupled pendula we are studying, the Jacobian at the origin has the same eigenvectors as the Laplacian matrix. Moreover, the invariant space $\mathcal{A}$ associated with the odd-balanced relation contains the origin. So, there is at least one eigenvector of $L$ that lies in $\mathcal{A}$. Since $\mathcal{A}$ is a linear invariant space one can check that for analytic nonlinear odd functions $g$ only sign relations act linearly on it (we conclude the proof by showing such result), leaving as the only option the vectors $v_{\mathcal{B}/\mathcal{T}}$. 
    
    Let $g(x)$ be an analytic nonlinear odd function
    \begin{equation}
        g(x)=\sum_{n=0}^\infty c_n x^{2n+1} ,
    \end{equation}
    where $c_n \in \mathbb{R}$, $\forall n$. We consider a linear transformation of $x$, i.e., $x \mapsto \alpha x + \beta$, where $\alpha, \beta \in  \mathbb{R}$. So, we look for the linear transformations that act linearly on the nonlinear function $g(x)$, which means
    \begin{equation}
        \alpha \sum_{n=0}^\infty c_n x^{2n+1} + \beta = \sum_{n=0}^\infty c_n (\alpha x + \beta)^{2n+1}.
    \end{equation}
    Since there is no assumption on the coefficients, we consider the equation 
    \begin{equation}
         \alpha x^{2n+1} + \beta =  (\alpha x + \beta)^{2n+1} ,
    \end{equation}
    $\forall n \in \mathbb{N}$. Using the Binomial Theorem, and isolating the first and last term we obtain
    \begin{equation}
         \alpha x^{2n+1} + \beta =  \alpha^{2n + 1} x^{2n + 1} + \beta + \sum_{k=1}^{2n} \binom{2n+1}{k}  (\alpha x)^{2n + 1-k} \beta^k    .
    \end{equation}
    We notice that the first condition needed is $\alpha^{2n}=1$, that leads to the real solutions $\alpha=\pm 1$. We are left with the equation
    \begin{equation}
           \sum_{k=1}^{2n} \binom{2n+1}{k}  (\pm x)^{2n + 1-k} \beta^k  = 0   ,
    \end{equation}
    from which we obtain $\beta=0$.
\end{proof}

Let us restrict the study of bifurcations to the anti-synchrony subspaces induced by the vectors $v_{\mathcal{B}/\mathcal{T}}$. The computations performed in the proof of Proposition \ref{prop:odd_iff} can be done for the system of coupled pendula \eqref{eq:equations_motion}, obtaining the vector field
\begin{equation}
\begin{aligned}
     \dot{q}_\mathcal{P} &= p_\mathcal{P} + \kappa d_\mathcal{N} G_{01}(2q_\mathcal{P},2p_\mathcal{P}) + \kappa d_\mathcal{Z} G_{01}(q_\mathcal{P},p_\mathcal{P}) , \\
     \dot{p}_\mathcal{P} &= - \sin{q_\mathcal{P}} - \kappa d_\mathcal{N} G_{10}(2q_\mathcal{P},2p_\mathcal{P}) - \kappa d_\mathcal{Z} G_{10}(q_\mathcal{P},p_\mathcal{P}) , \\
     \dot{q}_\mathcal{N} &= - p_\mathcal{N} - \kappa d_\mathcal{P} G_{01}(2q_\mathcal{N},2p_\mathcal{N}) - \kappa d_\mathcal{Z} G_{01}(q_\mathcal{N},p_\mathcal{N}) , \\
     \dot{p}_\mathcal{N} &=  \sin{q_\mathcal{N}} + \kappa d_\mathcal{P} G_{10}(2q_\mathcal{N},2p_\mathcal{N}) + \kappa d_\mathcal{Z} G_{10}(q_\mathcal{N},p_\mathcal{N}) , \\
     \dot{q}_\mathcal{Z} &= 0 , \\
      \dot{p}_\mathcal{Z} &= 0 ,
\end{aligned}    
\end{equation}
effectively reducing the study to a $1$-d.o.f. system
\begin{equation}\label{eq:reduced}
\begin{aligned}
     \dot{x} &= y + \kappa d_\pm G_{01}(2x,2y) + \kappa d_0 G_{01}(x,y) , \\
     \dot{y} &= - \sin{x} - \kappa d_\pm G_{10}(2 x,2 y) - \kappa d_0 G_{10}(x , y) ,
\end{aligned}    
\end{equation}
where $d_\pm= d_\mathcal{P} = d_\mathcal{N}$ and $d_0 = d_\mathcal{Z}$. 

\begin{Lemma}
    Let $v_{\mathcal{B}/\mathcal{T}}$ be a bivalent/trivalent eigenvector of $L$ with eigenvalue $\lambda$, then the following equality holds
    \begin{equation}
        \lambda = 2 d_\pm + d_0 . 
    \end{equation}
\end{Lemma}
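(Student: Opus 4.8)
The plan is to read off the identity directly from the eigenvalue equation $L v_{\mathcal{B}/\mathcal{T}} = \lambda v_{\mathcal{B}/\mathcal{T}}$, evaluated componentwise at a single node, exploiting that the components of $v_{\mathcal{B}/\mathcal{T}}$ take only the values in $\mathcal{B}=\{-1,1\}$ or $\mathcal{T}=\{-1,0,1\}$. First I would recall that the (unweighted) graph Laplacian acts as
\begin{equation}
    (L v)_i = d(i)\, v_i - \sum_{j=1}^N a_{ij} v_j ,
\end{equation}
where $d(i)=\sum_{j} a_{ij}$ is the degree of node $i$, and that, with the classes $\mathcal{P}$, $\mathcal{N}$, $\mathcal{Z}$ introduced in the proof of Proposition \ref{prop:odd_iff}, this degree splits as $d(i)=d_\mathcal{P}(i)+d_\mathcal{N}(i)+d_\mathcal{Z}(i)$ according to the class of the neighbour.

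The key step is to pick a positive node $i\in\mathcal{P}$, so that $v^i_{\mathcal{B}/\mathcal{T}}=1$. Using the sign and zero values of the components, the weighted neighbour sum collapses to
\begin{equation}
    \sum_{j} a_{ij} v^j_{\mathcal{B}/\mathcal{T}} = d_\mathcal{P}(i)\cdot 1 + d_\mathcal{N}(i)\cdot(-1) + d_\mathcal{Z}(i)\cdot 0 = d_\mathcal{P}(i) - d_\mathcal{N}(i) ,
\end{equation}
so that the eigenvalue equation at node $i$ reads
\begin{equation}
    \lambda = (L v_{\mathcal{B}/\mathcal{T}})_i = \big(d_\mathcal{P}(i)+d_\mathcal{N}(i)+d_\mathcal{Z}(i)\big) - \big(d_\mathcal{P}(i)-d_\mathcal{N}(i)\big) = 2 d_\mathcal{N}(i) + d_\mathcal{Z}(i) .
\end{equation}
The factor of $2$ is exactly the contrast between the $+1$ contribution the node would make to its own class-degree and the $-1$ contribution coming from the anti-synchronous neighbours. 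Invoking the fact, already used in the reduction to \eqref{eq:reduced} and justified through \cite{caputo2019graph}, that $d_\mathcal{N}(i)=d_\pm$ and $d_\mathcal{Z}(i)=d_0$ are independent of the chosen node, this yields $\lambda = 2 d_\pm + d_0$, as claimed.

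For completeness I would also check consistency on the other two classes: repeating the computation at a negative node $i\in\mathcal{N}$ (where $v^i_{\mathcal{B}/\mathcal{T}}=-1$) gives $-\lambda = -2 d_\mathcal{P}(i) - d_\mathcal{Z}(i)$, hence $\lambda = 2 d_\mathcal{P}(i) + d_\mathcal{Z}(i) = 2 d_\pm + d_0$ using $d_\mathcal{P}=d_\mathcal{N}=d_\pm$, while a zero node $i\in\mathcal{Z}$ merely enforces the balance condition $d_\mathcal{P}(i)=d_\mathcal{N}(i)$ and carries no information about $\lambda$. I do not expect a genuine obstacle here: the argument is a one-line bookkeeping of the Laplacian action, and the only non-elementary ingredient, namely the node-independence of the class degrees $d_\pm$ and $d_0$, has already been established earlier in the paper, so it may simply be cited rather than reproved.
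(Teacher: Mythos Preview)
Your proof is correct and takes a genuinely different route from the paper. The paper computes the Jacobian of the reduced one-degree-of-freedom system \eqref{eq:reduced} at the origin, obtains its eigenvalues
\[
\pm \sqrt{-\left( 1+ 2 \kappa c_{10} (2 d_\pm + d_0) \right)\left( 1+ 2 \kappa c_{01} (2 d_\pm + d_0) \right) },
\]
and then matches them against the already-known eigenvalues \eqref{eq:eig_or} of the full Jacobian at the origin coming from Proposition~\ref{prop:synch_eig}; since $v_{\mathcal{B}/\mathcal{T}}$ is the Laplacian eigenvector associated with $\lambda$, the corresponding entry of \eqref{eq:eig_or} forces $\lambda=2d_\pm+d_0$. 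Your argument is purely graph-theoretic: you read the identity directly from $Lv_{\mathcal{B}/\mathcal{T}}=\lambda v_{\mathcal{B}/\mathcal{T}}$ at a single positive node, with no detour through the vector field or its linearisation. The gain of your approach is brevity and self-containment; the gain of the paper's indirect argument is that it simultaneously confirms the consistency of the reduction to \eqref{eq:reduced} --- the reduced Jacobian really does reproduce the expected eigenvalue pair from the full system --- which is itself a worthwhile check en route to Proposition~\ref{prop:pitchfork}.
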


\begin{proof}
    We compute the Jacobian of \eqref{eq:reduced} at the origin, and from the diagonalisation it follows that the eigenvalues are 
    \begin{equation}\label{eq:eig_second}
        \pm \sqrt{-\left( 1+ 2 \kappa c_{10} (2 d_\pm + d_0) \right)\left( 1+ 2 \kappa c_{01} (2 d_\pm + d_0) \right) } .
    \end{equation}
Comparing \eqref{eq:eig_second} with \eqref{eq:eig_or} the statement follows.
\end{proof}

Notice that, we can write down an Hamiltonian inducing the vector field \eqref{eq:reduced}, i.e.,
\begin{equation}\label{eq:h_red}
    K = \frac{y^2}{2} -\cos{x} + \kappa \frac{d_\pm}{2}  G(2 x,2 y) + \kappa d_0 G(x , y) .
\end{equation}
By expanding $K$ near the origin up to fourth order, and reordering the terms, we obtain
%
\begin{equation}\label{eq:h_red_exp}
    \begin{aligned}
    K -k_0 &= x^4 \left(\kappa c_{20}  (8 d_\pm +d_0) - \frac{1}{24} \right) + y^4 \kappa c_{02}  (8 d_\pm +d_0) + x^2 y^2 \kappa c_{11} \left( \frac{4}{3} d_\pm +\frac{1}{6} d_0 \right) + \\
    & + y^2 \left( \frac{1}{2} + \kappa \lambda c_{01} \right) + x^2 \left( \frac{1}{2} + \kappa \lambda c_{10} \right)  +\text{h.o.t.} ,
\end{aligned}
\end{equation}
where $k_0$ is a constant. We notice that the coefficients of $p^2$ and $q^2$ go to zero when $\kappa$ reaches the critical values for bifurcation, i.e., $\kappa=1/(2 \lambda c_{01})$ or $\kappa=1/(2 \lambda c_{10})$. Therefore, expression \eqref{eq:h_red_exp} resembles the unfolding, under the symmetry conditions proper of the system, of the $X_9$ singularity \cite{Arnold2012SingularitiesOD}, 
\begin{equation}\label{eq:x9}
    x^4 +y^4 +a x^2y^2 ,
\end{equation}
where $a^2\neq4$, also known as double cusp \cite{upstill1982double}.

\begin{Proposition}\label{prop:pitchfork}
    At the critical values $\kappa^\textup{crit}\in\left\{\frac{1}{2 c_{10} \lambda},\frac{1}{2 c_{01} \lambda} \right\}$, where $\lambda$ is an eigenvalue of the Laplacian associated to a bivalent/trivalent eigenvector, the equilibrium at the origin of \eqref{eq:reduced} undergoes pitchfork bifurcations.
\end{Proposition}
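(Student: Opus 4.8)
The plan is to reduce the two-dimensional bifurcation problem to a scalar one along an invariant axis and then to verify the classical normal-form conditions for a pitchfork. The starting observation is that \eqref{eq:reduced} is Hamiltonian with the Hamiltonian $K$ of \eqref{eq:h_red}, and that $K$ is even separately in $x$ and in $y$, since $G$ contains only even powers and $\cos$ is even. Consequently $\partial_x K$ is odd in $x$ and even in $y$, while $\partial_y K$ is odd in $y$ and even in $x$. Equilibria of \eqref{eq:reduced} are precisely the critical points of $K$, and the origin is always one of them; the Hessian at the origin is diagonal, with entries equal (up to a factor $2$) to the quadratic coefficients $\tfrac12+\kappa\lambda c_{10}$ and $\tfrac12+\kappa\lambda c_{01}$ appearing in \eqref{eq:h_red_exp}, in agreement with the eigenvalues \eqref{eq:eig_or}.

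First I would treat the critical value associated with $c_{10}$, i.e. the one at which the $x^2$-coefficient in \eqref{eq:h_red_exp} vanishes. As long as the $y^2$-coefficient $\tfrac12+\kappa\lambda c_{01}$ stays nonzero, the oddness of $\partial_y K$ in $y$ lets me write $\partial_y K(x,y)=y\,h(x,y^2)$ with $h(0,0)\neq0$; hence near the origin $\partial_y K=0$ forces $y=0$, and the implicit function theorem makes this reduction exact. This is a Lyapunov--Schmidt reduction onto the invariant line $\{y=0\}$, and it leaves the single scalar equation $F(x,\kappa):=\partial_x K(x,0)=0$. Because $F$ is odd in $x$, it expands as $F(x,\kappa)=a_1(\kappa)\,x+a_3(\kappa)\,x^3+O(x^5)$, where $a_1(\kappa)=1+2\kappa\lambda c_{10}$ and, reading the quartic terms off \eqref{eq:h_red_exp}, $a_3(\kappa)=4\bigl(\kappa c_{20}(8d_\pm+d_0)-\tfrac1{24}\bigr)$.

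The pitchfork then follows from three conditions on $F$. The trivial branch $x=0$ persists for all $\kappa$ by oddness; the bifurcation point is characterised by $a_1(\kappa^{\textup{crit}})=0$, which is exactly the defining relation for $\kappa^{\textup{crit}}$; transversality holds because $\partial_\kappa a_1=2\lambda c_{10}\neq0$, so $a_1$ changes sign at $\kappa^{\textup{crit}}$; and nondegeneracy requires $a_3(\kappa^{\textup{crit}})\neq0$. Under these conditions the standard pitchfork theorem (equivalently, the $\mathbb{Z}_2$-equivariant branching lemma) yields a symmetric pair of equilibria $\pm x_*(\kappa)$ emanating from the origin, with $x_*^2\propto -a_1/a_3$. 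In Hamiltonian language this is the birth of two centres out of a centre that simultaneously turns into a saddle along the $x$-direction, consistent with the pair of eigenvalues in \eqref{eq:eig_or} crossing zero. The critical value associated with $c_{01}$ is handled identically after exchanging the roles of the two axes: there $\tfrac12+\kappa\lambda c_{01}$ vanishes while $\tfrac12+\kappa\lambda c_{10}\neq0$, so $\partial_x K=0$ forces $x=0$, and the reduced odd equation $\partial_y K(0,y)=0$ has cubic coefficient governed by $c_{02}$ through the $y^4$ term of \eqref{eq:h_red_exp}.

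The main obstacle is the nondegeneracy condition $a_3(\kappa^{\textup{crit}})\neq0$: this is precisely the inequality separating the generic pitchfork from the degenerate organising centre. If both quadratic coefficients and the relevant quartic coefficient vanish together one lands on the $X_9$ / double-cusp singularity \eqref{eq:x9} discussed above, where a pitchfork description no longer suffices; the statement should therefore be read as holding away from that codimension-two locus, for which $a_3\neq0$ is generic in the coefficients $c_{lm}$ and the degrees $d_\pm,d_0$. I would accordingly record $a_3\neq0$ explicitly (together with the analogous requirement on the $c_{02}$-coefficient for the second critical value) as the hypothesis that makes each transition a genuine pitchfork.
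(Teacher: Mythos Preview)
Your argument is correct and is essentially the same as the paper's. Both proofs reduce the two--dimensional problem to a scalar one on an invariant coordinate axis and then check the standard pitchfork conditions (odd symmetry, vanishing linear part at $\kappa^{\textup{crit}}$, nonzero $\partial_\kappa$ of the linear coefficient, nonzero cubic coefficient). The only difference is packaging: you work with the critical points of the Hamiltonian $K$ and phrase the reduction as Lyapunov--Schmidt (using that $\partial_y K$ is odd in $y$ with nonzero linear part, hence $y=0$ is forced), whereas the paper works directly with the vector-field components $f=\partial_y K$ and $g=-\partial_x K$ and records the identities $G_{01}(x,0)=0$, $G_{10}(0,y)=0$ to see that the axes carry all nearby equilibria. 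These are the same observation, and your coefficients $a_1=1+2\kappa\lambda c_{10}$, $a_3=4\bigl(\kappa c_{20}(8d_\pm+d_0)-\tfrac1{24}\bigr)$ match the paper's computation of $\partial_x g$ and $\partial_x^3 g$ at the origin. Your explicit flag that $a_3(\kappa^{\textup{crit}})\neq 0$ (and its $c_{02}$ counterpart) is a genericity hypothesis is appropriate; the paper records exactly the same non-degeneracy quantities in its proof and in the Remark that follows.
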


\begin{proof}
    Let us write down again the equations we are considering,
    \begin{equation}
\begin{aligned}\label{eq:eq_invariant_reduced}
     \dot{x} &= y + \kappa d_\pm G_{01}(2x,2y) + \kappa d_0 G_{01}(x,y) =: f(x,y,\kappa) , \\
     \dot{y} &= - \sin{x} - \kappa d_\pm G_{10}(2 x,2 y) - \kappa d_0 G_{10}(x , y) =: g(x,y,\kappa) .
\end{aligned}    
\end{equation}
We notice that $G_{01}$ does not contain the coefficient $c_{10}$ and similarly $G_{10}$ does not contain the coefficient $c_{01}$. So, generically, i.e., for $c_{10}\neq c_{01}$, the two bifurcations occur separately, reducing the analysis to two codimension one bifurcations, respectively of $f(x,y,\kappa)$ and $g(x,y,\kappa)$.The splitting of the bifurcations can be understood more explicitly by noticing that the following equations hold
\begin{align}
    &G_{10}(0,y)=0 , \\&G_{01}(0,y)=\sum_{m=1}^\infty c_{0m} 2m y^{2m-1} ,\\
    &G_{10}(x,0)=\sum_{l=1}^\infty c_{l0} 2l x^{2l-1} , \\  &G_{01}(x,0)=0 .
\end{align}
So if we restrict \eqref{eq:eq_invariant_reduced} to $x=0$ we have
\begin{equation}
\begin{aligned}
     \dot{x} &=  f(0,y,\kappa) , \\
     \dot{y} &= 0 ,
\end{aligned}
\end{equation}
while restricting \eqref{eq:eq_invariant_reduced} to $y=0$ we obtain 
\begin{equation}
\begin{aligned}
     \dot{x} &=  0 , \\
     \dot{y} &= g(x,0,\kappa) .
\end{aligned}
\end{equation}
Let us recall that, given a function $h(x,y,\kappa)$, in order to have a pitchfork bifurcation of the origin  along the $x$ coordinates at $\kappa^\text{crit}$ we need that $h$ is an odd function, and
    \begin{align}
        h(0, 0,\kappa^\text{crit}) = 0 , \quad
        &\frac{\partial h}{\partial x} (0, 0,\kappa^\text{crit}) = 0 , \\
        \frac{\partial^2 h}{\partial x^2} (0, 0,\kappa^\text{crit}) = 0 , \quad
        &\frac{\partial^3 h}{\partial x^3} (0, 0, \kappa^\text{crit}) \neq 0 , \\
        \frac{\partial h}{\partial \kappa} (0, 0, \kappa^\text{crit}) = 0 , \quad
        &\frac{\partial^2 h}{\partial \kappa \partial x} (0, 0, \kappa^\text{crit}) \neq 0 .
    \end{align}
Clearly, the same applies if we look along the $y$ direction. Such conditions are verified for $f(x,y, \kappa)$ along $y$ and $g(x,y, \kappa)$ along $x$. We report the non-degeneracy conditions: for $f$ with $\kappa^\text{crit}=\frac{1}{2 c_{01} \lambda}$ we have
\begin{align}
    \frac{\partial^3 f}{\partial y^3} (0, 0,\kappa^\text{crit}) &= - \frac{12 c_{02}}{\lambda c_{01}} \left( 8 d_\pm +d_0 \right) ,\\
     \frac{\partial^2 f}{\partial \kappa \partial y} (0, 0,\kappa^\text{crit}) &= 2 \lambda c_{01} ,
\end{align}
while for $g$ with $\kappa^\text{crit}=\frac{1}{2 c_{10} \lambda}$ we have
\begin{align}
    \frac{\partial^3 g}{\partial x^3} (0, 0,\kappa^\text{crit}) &= 1 + \frac{12 c_{20}}{\lambda c_{10}} \left( 8 d_\pm +d_0 \right) ,\\
     \frac{\partial^2 g}{\partial \kappa \partial x} (0, 0,\kappa^\text{crit}) &= 2 \lambda c_{10} .
\end{align}
\end{proof}

\begin{Remark}
    The non-degeneracy conditions consistently match with the non-degeneracy requirements that one would expect from the expression of the singularity \eqref{eq:h_red_exp}. Moreover, from equation \eqref{eq:h_red_exp}, comparing with the normal form \eqref{eq:x9} for the singularity $X_9$, we can read a further non-degeneracy condition, $a^2 \neq 4$ that involves the structure of the singularity. 
\end{Remark}

At this point, one might be interested to understand nonlocal properties of the system after the bifurcations, such as the existence of \emph{homoclinic} orbits. Let us recall that the equations in the invariant spaces defined by $v_{\mathcal{B}/\mathcal{T}}$ can be associated to a Hamiltonian function, see \eqref{eq:h_red} and \eqref{eq:h_red_exp}, which is associated to the double cusp singularity \eqref{eq:x9}. We consider a function that mimics the structure of \eqref{eq:h_red_exp} as an unfolding of \eqref{eq:x9},
\begin{equation}\label{eq:mimics}
   F= x^4 + y^4 + a x^2 y^2 +\alpha x^2 +\beta y^2 ,
\end{equation}
where $\alpha, \beta \in \mathbb{R}$, and $a$ is the same as in \eqref{eq:x9}. The bifurcations of \eqref{eq:mimics} take place at $\alpha=0$ and $\beta=0$. We notice that the level-set of the origin is given by $F=0$, so for $\alpha$ and $\beta$ negative only the origin belongs to the level-set. Analogously to what happens for the cusp singularity \cite{efstathiou2012topology}, when either $\alpha$ or $\beta$ cross zero a homoclinic orbit appear. However, when both bifurcations have occurred, we see the appearance of \emph{heteroclinic} connections. In Figure \ref{fig:homoclinic}, we show some representative examples of homoclinic and heteroclinic solutions appearing as level-sets of \eqref{eq:mimics}.

\begin{figure}[ht]
     \centering
     \begin{subfigure}[b]{0.3\textwidth}
         \centering
         \includegraphics[width=\textwidth]{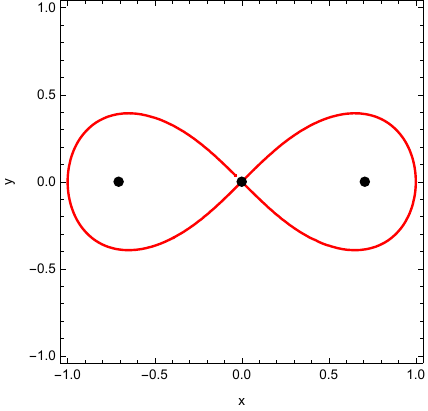}
         \caption{ $\alpha=-1$, $\beta=1$}
         \label{fig:bif_x}
     \end{subfigure}
     \begin{subfigure}[b]{0.3\textwidth}
         \centering
         \includegraphics[width=\textwidth]{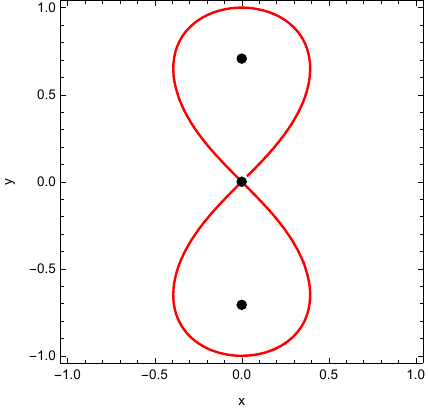}
         \caption{$\alpha=1$, $\beta=-1$}
         \label{fig:bif_y}
     \end{subfigure}
     \begin{subfigure}[b]{0.3\textwidth}
         \centering
         \includegraphics[width=\textwidth]{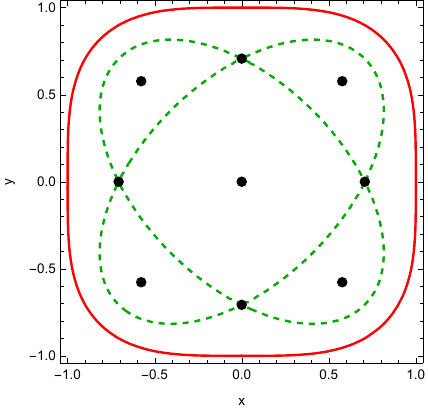}
         \caption{$\alpha=-1$, $\beta=-1$}
         \label{fig:bif_xy}
     \end{subfigure}
        \caption{Homoclinic and heteroclinic solutions as level-sets of \eqref{eq:mimics}. We have set $a=1$ as it does not affect the qualitative structure of the sets. The black dots are the critical points of \eqref{eq:mimics}, while the red curves are given by $F=0$, i.e., they belong to the same level-set of the origin. Figure \ref{fig:bif_x} and  \ref{fig:bif_y} represent the level-sets associated to the critical points respectively after a bifurcation along the $x$ and $y$ axis. Figure \ref{fig:bif_xy} represents the situation after both bifurcations have occurred, here the green-dashed orbits belong to the level-set $F=-1/4$.}
        \label{fig:homoclinic}
\end{figure}


\section{Large Graphs}

Studying networks with a large number of nodes is relevant for understanding the statistical properties of large, but finite, interacting systems. In particular, in the Hamiltonian context, there are relevant connections between ergodic theory and KAM theory. As a matter of fact, systems with large numbers of particles have been successfully studied utilizing statistical mechanics, where one assumes the ergodic hypothesis. In other words, to successfully apply the statistical techniques one has to assume that the systems after a sufficiently long time will visit almost all the points in the phase space. So a natural question arises, when is it possible to assume the ergodic hypothesis, or equivalently, when and where is the system chaotic? Now, KAM theory asserts that for sufficiently small perturbations most of the orbits will show quasi-periodic motion. This seems, at first glance, to forbid ergodicity for weakly coupled systems. If we consider the problem with more care, we realize that in such a context quantifying the meaning of \emph{small} perturbation becomes crucial.

We dedicate this section to studying the behaviour of the system under investigation for networks with a large number of nodes, $N \gg 1$. Specifically, we identify some relations between the graph properties and the range in which the bifurcations of the systems occur. In Section \ref{sec:simulations}, we show how the bifurcations of the origin are associated with the emergence of weak chaos in regions of low energy. Combining the results we can provide a step connecting the regimes of ergodicity with the network structure and the \enquote{smallness} of the interaction.

\begin{Proposition}\label{prop:intervals}
    The critical values \eqref{eq:crit_origin} associated to bifurcations of the origin are contained in the intervals
    \begin{equation}\label{eq:interval}
        -\frac{1}{2 c_{10}} \cdot \left[ \frac{1}{N}, \frac{N}{2 \kappa'} \right] \ \text{and} \  -\frac{1}{2 c_{01}} \cdot \left[ \frac{1}{N}, \frac{N}{2 \kappa'} \right],
    \end{equation}
    where $\kappa'$ is the edge-connectivity of the graph, i.e., the minimum number of edges whose deletion would lead to a disconnected graph.
\end{Proposition}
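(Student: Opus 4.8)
The plan is to strip off the multiplicative constant and reduce the claim to a purely spectral two-sided estimate on the nonzero Laplacian eigenvalues. By \eqref{eq:crit_origin} the critical values are $\kappa^{\textup{crit}}=-\frac{1}{2c_{10/01}}\cdot\frac{1}{\lambda_l}$ with $l\in\{2,\dots,N\}$. Under the standing hypothesis $c_{10/01}<0$ of Lemma \ref{lm:bif_origin}, and since for a connected graph $0=\lambda_1<\lambda_2\le\dots\le\lambda_N$, the prefactor $-\frac{1}{2c_{10/01}}$ is a fixed positive number and $\kappa^{\textup{crit}}$ is a strictly decreasing function of $\lambda_l$. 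Hence the entire family of critical values lies in the stated interval precisely when every nonzero eigenvalue satisfies
\[
  \frac{2\kappa'}{N}\le\lambda_l\le N ,
\]
and it is enough to prove the upper bound $\lambda_N\le N$ together with the lower bound $\lambda_2\ge\frac{2\kappa'}{N}$; the two intervals in the statement are then handled identically, one with $c_{10}$ and one with $c_{01}$.

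For the upper bound I would pass to the complement graph $\bar{\mathcal{G}}$ and use the identity $L(\mathcal{G})+L(\bar{\mathcal{G}})=N\,\mathbb{I}-\mathbf{1}\mathbf{1}^\intercal$. Any eigenvector of $L(\mathcal{G})$ belonging to a nonzero eigenvalue is orthogonal to $\mathbf{1}$, so the rank-one term annihilates it and it is simultaneously an eigenvector of $L(\bar{\mathcal{G}})$ with eigenvalue $N-\lambda_l$. Positive semidefiniteness of $L(\bar{\mathcal{G}})$ then forces $\lambda_l\le N$ (with equality exactly when $\bar{\mathcal{G}}$ is disconnected). This step is routine.

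The lower bound is the crux. The combinatorial input is that for every nonempty proper vertex set $S\subsetneq\mathcal{V}$ the edge boundary $\partial S$ is an edge cut separating $S$ from its complement, so $|\partial S|\ge\kappa'$; taking $S$ to be the smaller side gives $\min(|S|,N-|S|)\le N/2$, whence the edge-isoperimetric number obeys $h(\mathcal{G}):=\min_{\emptyset\ne S\subsetneq\mathcal{V}}\frac{|\partial S|}{\min(|S|,N-|S|)}\ge\frac{2\kappa'}{N}$. It then remains to convert this isoperimetric estimate into a spectral-gap estimate. I would work from the Rayleigh quotient $\lambda_2=\min_{x\perp\mathbf{1}}\frac{x^\intercal L x}{\|x\|^2}$ evaluated on the Fiedler vector, and bound the numerator $\sum_{\{i,j\}\in\mathcal{E}}(x_i-x_j)^2$ from below by invoking Menger's theorem: $\kappa'$-edge-connectivity yields $\kappa'$ edge-disjoint paths between any two vertices, and a Cauchy--Schwarz telescoping of each difference $(x_i-x_j)^2$ along such paths transfers the long-range differences onto genuine edges.

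The main obstacle is exactly this conversion with the sharp constant. The naive path/Poincar\'e bookkeeping loses a factor proportional to the path length and delivers only $\lambda_2\ge\frac{2\kappa'}{(N-1)^2}$, i.e.\ an $N^{-2}$ rather than $N^{-1}$ dependence. Recovering the claimed $\frac{2\kappa'}{N}$ therefore requires a more careful accounting of the path structure (or an appeal to a Fiedler/Cheeger-type inequality tuned to the edge-connectivity), and I expect this to be where essentially all the work lies; the reduction in the first paragraph and the upper bound are comparatively immediate.
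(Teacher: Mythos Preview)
Your reduction to the two spectral bounds $\lambda_N\le N$ and $\lambda_2\ge 2\kappa'/N$ is exactly what the paper does. For the upper bound the paper simply cites $\lambda_N\le N$ from the spectral graph theory literature; your complement-graph argument is a standard self-contained proof of the same fact, so here you are doing slightly more than the paper but along the same lines.

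The difference is entirely in the lower bound. The paper does \emph{not} attempt to derive $\lambda_2\ge 2\kappa'/N$ from first principles: it quotes this as a known Cheeger-type inequality due to Oshikiri (referenced alongside a standard monograph on graph spectra) and uses it as a black box. So the ``work'' you anticipate---converting the isoperimetric estimate $h(\mathcal G)\ge 2\kappa'/N$ into a spectral-gap estimate with the correct $N^{-1}$ rather than $N^{-2}$ dependence---is not part of the paper's argument at all; it is outsourced to the cited reference. Your instinct that a naive Menger/path-telescoping bound loses a factor of the diameter is correct, and that is precisely why the paper invokes the literature rather than reproving the inequality. In short: your proposal is not wrong, just more ambitious than the paper's proof, which is essentially a two-line appeal to known eigenvalue bounds on $\lambda_2$ and $\lambda_N$.
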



\begin{proof}
    We recall that the critical values for bifurcations of the origin are $\kappa^\text{crit} = - 1/(2c_{10/01} \lambda_l)$, which immediately explains the scalar factor in front of the interval \eqref{eq:interval}. We notice that the interval where the critical values are contained is given by the largest and the smallest non-zero eigenvalues of the Laplacian, the latter is also known as \emph{spectral gap} or \emph{algebraic connectivity}. The largest eigenvalue of $L$, denoted by $\lambda_N$ satisfies the inequality $\lambda_N \leq N$, see \cite{cvetkovic2009introduction}. The inverse relation gives the left extremum of the interval \eqref{eq:interval}. For the right extremum we use a Cheeger-type inequality found by Oshikiri \cite{oshikiri2002cheeger, cvetkovic2009introduction}, $\lambda_2 \geq 2 \kappa' / N$.
\end{proof}

\begin{Lemma}
    Let $\mathcal{G}_{i_1}, \mathcal{G}_{i_2}, \dots$ be an infinite sequence of graphs, where the labels $i_1 < i_2 < \dots $ are the number of nodes of the respective graph. If the edge-connectivity is proportional to the number of nodes of the graph, i.e., $\kappa' \propto i_n$, then the interval \eqref{eq:interval} in which bifurcations of the origin occur converge to a finite interval in the limit of the sequence $\mathcal{G}_{i_n \to \infty}$.
\end{Lemma}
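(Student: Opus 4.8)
The plan is to analyse directly the behaviour of the two endpoints of the interval \eqref{eq:interval} supplied by Proposition \ref{prop:intervals} as the number of nodes $N = i_n$ grows along the sequence. Since the prefactors $-\frac{1}{2c_{10}}$ and $-\frac{1}{2c_{01}}$ are fixed constants independent of the graph, it suffices to track the dimensionless interval $\left[\frac{1}{N}, \frac{N}{2\kappa'}\right]$ and to show that each of its endpoints tends to a finite limit; the full statement then follows for both intervals simultaneously by reinserting the two prefactors.

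First I would make the hypothesis precise. The assumption that the edge-connectivity is proportional to the number of nodes means there is a constant $c > 0$ with $\kappa'(\mathcal{G}_{i_n}) = c\, i_n$ (or, under a weaker reading, $\kappa'(\mathcal{G}_{i_n})/i_n \to c$) for all large $n$. Substituting $N = i_n$ turns the left endpoint into $1/i_n$ and the right endpoint into $\frac{i_n}{2\kappa'} = \frac{1}{2c}$. The decisive computation is then just the evaluation of these two limits: as $i_n \to \infty$ the left endpoint $1/i_n \to 0$, while the right endpoint is identically $\frac{1}{2c}$ (or tends to it under the asymptotic reading). Hence the dimensionless intervals converge, say in the Hausdorff sense, to $\left[0, \frac{1}{2c}\right]$, and restoring the scalars yields the finite limiting intervals $-\frac{1}{2c_{10}}\left[0, \frac{1}{2c}\right]$ and $-\frac{1}{2c_{01}}\left[0, \frac{1}{2c}\right]$.

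The only point requiring care — and the place where the proportionality assumption is genuinely used — is the boundedness of the right endpoint. That endpoint originates from the Cheeger-type bound $\lambda_2 \geq 2\kappa'/N$, so that $\frac{N}{2\kappa'}$ is an upper bound for $1/\lambda_2$; it stays finite in the limit precisely because $\kappa'$ grows linearly in $N$. Were the edge-connectivity to grow sublinearly, this endpoint would diverge and the limiting interval would be unbounded, so linear growth is exactly the sharp condition for the claimed convergence. The left endpoint, by contrast, tends to $0$ unconditionally, being controlled by the universal inequality $\lambda_N \leq N$; its vanishing limit simply reflects that for large graphs the bifurcation values may accumulate near the uncoupled limit $\kappa = 0$.

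In summary, the proof is a short limiting argument on the two endpoints, and the main (and essentially only) obstacle is recognising that keeping the upper extremum finite forces the linear-growth hypothesis $\kappa' \propto i_n$; everything else reduces to the two elementary limits above.
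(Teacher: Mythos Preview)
Your proposal is correct. The paper actually states this Lemma without any proof, treating it as an immediate consequence of Proposition~\ref{prop:intervals}; your argument is exactly the direct endpoint computation the paper leaves implicit, so there is nothing to compare beyond noting that you have spelled out what the authors considered self-evident.
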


A sequence of graphs satisfying the property $\kappa' \propto N$ can be constructed by following the lines of the next theorem.

\begin{Theorem}[\cite{behzad1971introduction}]
    A graph $\mathcal{G}=\{ \mathcal{V}, \mathcal{E}\}$ has edge-connectivity $\kappa'= M$ if and only if there exist no nonempty proper subset $\mathcal{W}$ of $\mathcal{V}$ such that the number of edges joining $\mathcal{W}$ and $\mathcal{V} \setminus \mathcal{W}$ is less than $M$.
\end{Theorem}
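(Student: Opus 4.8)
The plan is to reduce the statement to the standard identification of edge-connectivity with the minimum size of an edge cut determined by a bipartition of the vertex set. For a nonempty proper subset $\mathcal{W} \subset \mathcal{V}$, write $E(\mathcal{W}) := \{ \{i,j\} \in \mathcal{E} \ | \ i \in \mathcal{W}, \ j \in \mathcal{V}\setminus\mathcal{W} \}$ for the set of edges joining $\mathcal{W}$ to its complement, and let $c(\mathcal{G}) := \min_{\mathcal{W}} |E(\mathcal{W})|$ denote the minimum cut. The core claim to establish is the equality $\kappa' = c(\mathcal{G})$; once this is in place, the theorem is immediate, since $\kappa' = M$ then says precisely that the minimum over all bipartitions equals $M$, i.e. no cut is smaller than $M$ while the value $M$ is attained.

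First I would prove $\kappa' \leq c(\mathcal{G})$. Given any bipartition, deleting the edges of $E(\mathcal{W})$ leaves no edge between $\mathcal{W}$ and $\mathcal{V}\setminus\mathcal{W}$, so the resulting graph is disconnected; since $\kappa'$ is by definition the minimum number of edges whose deletion disconnects $\mathcal{G}$, we obtain $\kappa' \leq |E(\mathcal{W})|$ for every $\mathcal{W}$, hence $\kappa' \leq c(\mathcal{G})$.

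The converse inequality $\kappa' \geq c(\mathcal{G})$ is the step I expect to be the main obstacle, because a disconnecting set of edges need not a priori be of the form $E(\mathcal{W})$. The idea is to take a minimum disconnecting set $F$ with $|F| = \kappa'$. The graph $\mathcal{G} - F$ then has at least two connected components; letting $\mathcal{W}$ be the vertex set of one of them, every edge joining $\mathcal{W}$ to $\mathcal{V}\setminus\mathcal{W}$ must have been removed (otherwise its two endpoints would lie in the same component of $\mathcal{G} - F$), so $E(\mathcal{W}) \subseteq F$. This yields $c(\mathcal{G}) \leq |E(\mathcal{W})| \leq |F| = \kappa'$, and combining the two inequalities gives $\kappa' = c(\mathcal{G})$.

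Finally I would translate the equality into the stated iff. The forward direction is clear: if $\kappa' = M$ then $c(\mathcal{G}) = M$, so no $\mathcal{W}$ satisfies $|E(\mathcal{W})| < M$. For the backward direction, the absence of any cut smaller than $M$ gives $c(\mathcal{G}) \geq M$, and together with the fact that the minimum $c(\mathcal{G})$ is attained one reads off $\kappa' = c(\mathcal{G}) = M$. The only subtlety worth flagging is the implicit assumption that $\mathcal{G}$ is connected, so that $\kappa'$ and $c(\mathcal{G})$ are both well-defined and positive; for a disconnected graph both quantities vanish and the statement degenerates.
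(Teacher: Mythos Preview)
The paper does not prove this theorem; it is quoted verbatim from \cite{behzad1971introduction} and used only as a tool for constructing graph sequences with prescribed edge-connectivity. There is therefore no proof in the paper to compare your argument against.

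Your reduction to the identity $\kappa' = c(\mathcal{G})$ (minimum disconnecting set equals minimum vertex-bipartition cut) is the right idea, and both inequalities are argued correctly. The one genuine gap is in your final paragraph. For the backward direction you write that ``the absence of any cut smaller than $M$ gives $c(\mathcal{G}) \geq M$, and together with the fact that the minimum $c(\mathcal{G})$ is attained one reads off $\kappa' = c(\mathcal{G}) = M$''. But attainment of the minimum only tells you $\kappa' = c(\mathcal{G}) \geq M$; nothing in the hypothesis forces $c(\mathcal{G}) \leq M$. Concretely, $K_4$ has $\kappa' = 3$, and no nonempty proper $\mathcal{W}$ has fewer than $2$ crossing edges, yet $\kappa' \neq 2$. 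So the iff as literally written in the paper is false for the backward direction unless one reads it as ``$\kappa' \geq M$'' or adds the tacit hypothesis that some bipartition realises exactly $M$ crossing edges. This is almost certainly a transcription issue in how the result was quoted rather than a flaw in your reasoning, but your proof should flag it explicitly rather than paper over it with the phrase ``one reads off''.
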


We look at two relevant examples that show the extreme cases, in terms of edge-connectivity, for sequences of graphs with increasing number of nodes. The firsts are the path graphs, $P_N$, $N \in \mathbb{N}$, which always have $\kappa'= 1$. We expect that the interval in which the bifurcations occur becomes unbounded in the limit $N \to \infty$. Actually, we can compute explicitly the spectrum of the Laplacian for $P_N$, obtaining $\lambda_j = 2 - 2 \cos{(j \pi/N)}$. So in the limit for \eqref{eq:interval} we get
\begin{equation}
        \left[ -\frac{1}{8 c_{10/01}}, \infty \right[ .
\end{equation}
So, for path graphs with a large number of nodes the bifurcations are spread in a large range of $\kappa$, starting from a value close to $-1/(8 c_{10/01})$. As a consequence, the weak coupling regime at low energies lead to regular perturbations of the former system.
On the other hand, as a second example, we consider the complete graphs $K_N$, $N \in \mathbb{N}$, which have $\kappa'= N-1$, and therefore we expect a compact interval in the limit. Complete graphs are very degenerate and the Laplacian spectrum has one zero eigenvalue and all the other eigenvalues are equal to $N$. Then, all bifurcations occur at the same point $-1/(2 c_{10/01} N)$, that in the limit converge to zero. Clearly, the situation for complete graphs is dramatically different with respect to the path graphs. The bifurcations occurs all simultaneously at a value of $\kappa$ that is inversely proportional to the number of nodes. So for large complete graphs the the weak coupling can lead to complex dynamics and chaos at low energies.

\section{Simulations}\label{sec:simulations}



In this section, we use numerical methods to investigate the interplay between the bifurcation of equilibria and the emergence of chaos. We showcase through some examples how the bifurcation of the origin is associated with transitions of nearby trajectories from regular to chaotic. It is known that for Hamiltonian systems the transition to chaos is associated with the destruction of KAM tori, which in turn depends on the energy and coupling parameter \cite{ott2002chaos}. For two coupled pendula with harmonic interaction, it has been shown that the arising of chaos is indeed related to the total energy of the system, with an onset energy considerably distant from the minimum \cite{huynh2010two}. Reasonably, such an effect is related to an increasing prevalence of nonlinearities as the energy grows. Interestingly, the transitions we observe take place at \emph{low energy}, i.e., close to the minimum of the Hamiltonian. Arguably the geometry induced by the bifurcation of the origin is a key feature for understanding the mechanism for the appearance of chaos.   

For the following computations, we consider as interaction function a double-well potential
\begin{equation}
    G(x,y) = 1/4 -x^2 + x^4 ,
\end{equation}
inducing pitchfork bifurcations of the origin at the critical value $\kappa^\text{crit}=1/(2\lambda)$, where $\lambda$ is a non-zero eigenvalue of the Laplacian. We compute the Lyapunov exponents, by implementing the algorithm prescribed in \cite{sandri1996numerical}, to estimate the chaoticity of the simulated orbits, see Table \ref{tab:lce}.

\begin{table}[h!]
\begin{center}
\begin{tabular}{||c | c | c||} 
 \hline
 Network & Energy  & Lyapunov Exponents \\ [0.5ex] 
 \hline\hline
 \multirow{2}{2em}{$K_2$} & $E\approx-1.92$ &    $\{1.4 \cdot 10^{-3}, 1.5\cdot 10^{-3},  -1.4 \cdot 10^{-3}, -1.5\cdot 10^{-3}\}$  \\ 
  & $E\approx-1.85$&   $\{8.4\cdot 10^{-2}, 1.9\cdot 10^{-3}, -1.6\cdot 10^{-3}, -8.4 \cdot 10^{-2}\}$  \\ 
 \hline
 \multirow{2}{2em}{$K_3$} & $E\approx-2.87$&   $\{ 1.8\cdot 10^{-3}, 7.9\cdot 10^{-4},  7.0\cdot 10^{-4}, - 8.7 \cdot 10^{-4}, -5.2 \cdot 10^{-4}, -1.9  \cdot 10^{-3}\}$  \\ 
  & $E\approx-2.78$ &  $\{7.4 \cdot 10^{-2}, 4.4 \cdot 10^{-2}, 8.3 \cdot 10^{-4}, -1.4 \cdot 10^{-3}, -4.2 \cdot 10^{-2}, - 7.6 \cdot 10^{-2}\}$  \\
 \hline
 \multirow{3}{2em}{$P_3$} & $E\approx-2.90$ &   $\{1.8 \cdot 10^{-3}, 1.6 \cdot 10^{-3}, 3.2 \cdot 10^{-4}, -9.5 \cdot 10^{-3}, -2.2 \cdot 10^{-3}, -5.9 \cdot 10^{-4}\}$  \\
  & $E\approx-2.84$ &  $\{ 7.2 \cdot 10^{-2}, -2.5 \cdot 10^{-4}, 5.0 \cdot 10^{-4}, 4.8 \cdot 10^{-4}, -1.4 \cdot 10^{-4}, -7.3 \cdot 10^{-2}\}$    \\
  & $E\approx-2.48$ &  $\{ 3.1 \cdot 10^{-1}, 2.4 \cdot 10^{-2}, 2.9 \cdot 10^{-3}, -2.5 \cdot 10^{-3}, -2.4 \cdot 10^{-2}, -3.2 \cdot 10^{-1}\}$    \\  [1ex] 
 \hline
\end{tabular}
\end{center}
\caption{We report here the values of the Lyapunov exponents, with two digits approximation, in relation with the respective energies, and network structures.  The system has been simulated before and after the occurrence of a bifurcation. Comparing the results, we can notice a jump of one or two order of magnitudes in the positive Lyapunov exponents. The number of the exponents reaching larger magnitudes is related the the number of bifurcations taking place. We also observe that the overall magnitude of the Lyapunov exponents is rather small, with a largest order of magnitude of $10^{-1}$. So, we can infer the system it is weakly chaotic. One can also compare these results with the trajectories of the system. For the complete network with two nodes, $K_2$, with energies $E\approx-1.92$, $E\approx-1.85$ see Figures \ref{fig:trajectory_2_reg}, \ref{fig:trajectory_2_chaos}. For the complete network with three nodes, $K_3$, with energies $E\approx-2.87$, $E\approx-2.78$ see Figures \ref{fig:trajectory_3_comp_reg},  \ref{fig:trajectory_3_comp_chaos}. For the path network with three nodes, $P_3$, with energies $E\approx-2.90$, $E\approx-2.84$, $E\approx-2.48$  see Figures \ref{fig:trajectory_3_path_reg}, \ref{fig:trajectory_3_path_chaos_one},\ref{fig:trajectory_3_path_chaos_two}.}
\label{tab:lce}
\end{table}\textbf{}

\subsection{Two Nodes}

For a coupled system with two oscillators, there are two invariant spaces, at the origin, spanning the phase space: synchrony and anti-synchrony. As already mentioned, in synchrony orbits behave as a simple pendulum for all values of $\kappa$, therefore the bifurcation of the origin takes place in the anti-synchrony space. For such a reason, it is useful to consider what it is happening in the anti-synchrony space, $\mathcal{A}$, and the associated transversal stability. The bifurcation of the origin in $\mathcal{A}$ takes place at $\kappa^\text{crit}=1/4$, while the eigenvalues associated to the transversal eigendirections are $\pm i \sqrt{\cos{x}}$, where $x$ is the position coordinate in $\mathcal{A}$, see Figure \ref{fig:bif_2}.

\begin{figure}[ht]
\centering
\includegraphics[width=0.5\textwidth]{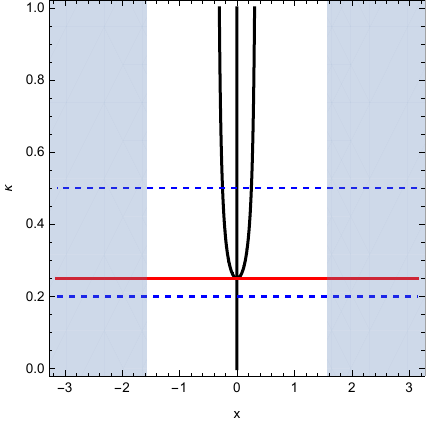}
\caption{The black line represents the pitchfork bifurcation of the origin in the anti-synchrony space $\mathcal{A}$. The shaded regions are transversely unstable with respect to $\mathcal{A}$. We drew a solid red line corresponding to the critical parameter for the bifurcation in $\mathcal{A}$, and two dashed blue lines for the values chosen for the simulations, see Figure \ref{fig:two_nodes}. }
\label{fig:bif_2}
\end{figure}

We performed two simulations of the system for fixed initial conditions close to the origin, one before the bifurcation and one after the bifurcation. It is insightful to see the evolution as a combination of the dynamics inside the anti-synchrony space, and its transversal stability. Before the bifurcation we have a center at the origin of $\mathcal{A}$ and elliptic behaviour orthogonally, therefore we expect regular behaviour, Figure \ref{fig:trajectory_2_reg}. After the bifurcation the origin in $\mathcal{A}$ becomes a saddle, and two centers appear from the pitchfork structure. Simultaneously, a homoclinic orbit will appear, as shown in Figure \ref{fig:bif_x}. The transversal stability is still elliptic. The motion now is a combination of the elliptic behaviour transversal to $\mathcal{A}$ and the unstable wandering induced by the saddle at the origin. We can see in Figure \ref{fig:trajectory_2_chaos} that the outcome is a chaotic trajectory. Let us notice that despite the origin having some unstable eigendirections the motion nearby is still bounded, as predicted by Proposition \ref{prop:bounded_motion}.

\begin{figure}[ht]
     \centering
     \begin{subfigure}[b]{0.45\textwidth}
         \centering
         \includegraphics[width=\textwidth]{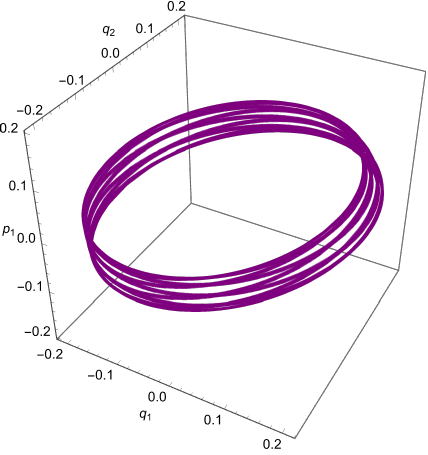}
         \caption{Regular 
         }
         \label{fig:trajectory_2_reg}
     \end{subfigure}
     \begin{subfigure}[b]{0.45\textwidth}
         \centering
         \includegraphics[width=\textwidth]{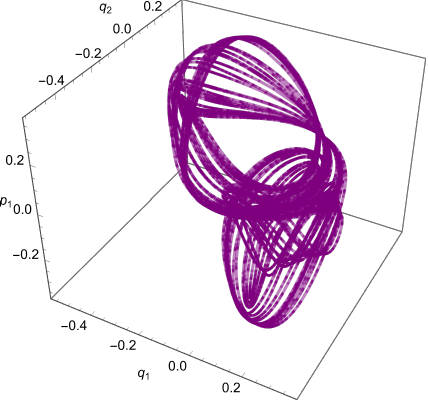}
         \caption{Chaotic 
         }
         \label{fig:trajectory_2_chaos}
     \end{subfigure}
        \caption{The two orbits shown have same initial conditions $(q_1,q_2,p_1,p_2)=(1/5,1/7,0,0)$. For the two d.o.f. system in consideration the orbits shown are proper orbits of the system, as one momentum variable can be fixed by exploiting the conserved energy. The critical parameter for the bifurcation of the origin is given by $\kappa^\text{crit}= 1/4$, the trajectories have respectively $\kappa=1/5$, Figure \ref{fig:trajectory_2_reg}, and $\kappa=1/2$, Figure \ref{fig:trajectory_2_chaos}. After the bifurcation, the orbit transition from regular to chaotic, although the energy is still close to the minimum, see Table \ref{tab:lce}. 
        }
        \label{fig:two_nodes}
\end{figure}

\subsection{Three Nodes: Complete and Path Graph}

As a second example we look at the networks with three nodes, the complete and the path graph. If we compute the linearisation at the origin for the complete graph we find that every eigenvector is associated to a linear invariant space of the system: synchrony, and two equivalent anti-synchrony spaces, $\mathcal{A}^{K_3}$, given by the eigenvectors $(-1,0,1)$ and $(-1,1,0)$. Moreover, we recall that for the complete graph the non-zero eigenvalues of the Laplacian are all equal, so the unique critical value for the bifurcations is $\kappa^\text{crit}=1/6$. Clearly, since we have two equivalent anti-synchrony spaces the bifurcation will occur simultaneously in both of them. If we look at one anti-synchrony space and its transversal behaviour, as we did for the two-node case, we indeed see that the transversal behaviour shows a transition exactly at $\kappa^\text{crit}$, Figure \ref{fig:bif_3_comp}. On the other hand, the Laplacian of the path graph has two different non-zero eigenvalues: $3$ and $1$. However, only the eigenvalue $1$ is associated to an anti-synchrony space, $\mathcal{A}^{P_3}$, defined by the eigenvector $(-1,0,1)$. If we restrict ourselves to $\mathcal{A}^{P_3}$, the origin bifurcates at $\kappa^\text{crit}=1/2$, while the transversal behaviour shows a transition for $\kappa^\text{crit}=1/6$, as expected from the eigenvalues of $L$. In other words, the complete graph represent a degenerate case where the bifurcations appear simultaneously, and the path graph instead is an example of a non-degenerate case. So, for the complete network $K_3$ we simulate two trajectories, one before and one after the bifurcation, Figure \ref{fig:three_nodes_complete}, while for the path network $P_3$ we simulate three orbits one before the first bifurcation, one in between the two bifurcations and one after the second bifurcation, Figure \ref{fig:three_nodes_path}.

\begin{figure}[ht]
     \centering
     \begin{subfigure}[b]{0.45\textwidth}
         \centering
         \includegraphics[width=\textwidth]{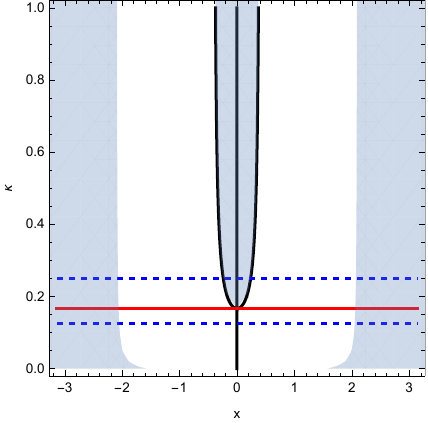}
         \caption{Complete graph $K_3$}
         \label{fig:bif_3_comp}
     \end{subfigure}
     \begin{subfigure}[b]{0.45\textwidth}
         \centering
         \includegraphics[width=\textwidth]{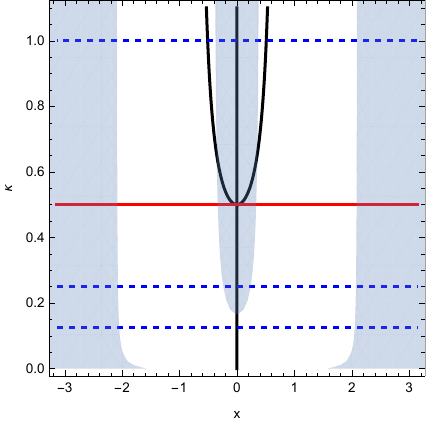}
         \caption{Path graph $P_3$}
         \label{fig:bif_3_path}
     \end{subfigure}
        \caption{The black lines represent the pitchfork bifurcation of the origin in the anti-synchrony spaces respectively $\mathcal{A}^{K_3}$, Figure \ref{fig:bif_3_comp}, and $\mathcal{A}^{P_3}$, Figure \ref{fig:bif_3_path}. The shaded regions are transversely unstable. We drew a solid red line corresponding to the critical parameter in the anti-synchrony space, and dashed blue lines for the values chosen for the simulations, see Figures \ref{fig:three_nodes_complete}, \ref{fig:three_nodes_path}.}
        \label{fig:bif_3}
\end{figure}

Now, considering the restriction coming from the conservation of energy, the phase space is five dimensional. Therefore we choose to show the trajectories in the position variables, resulting in some motion in the three-dimensional space. We notice how the bifurcation of the origin affects the nature of the orbits. When the origin is elliptic the system behaves regularly, Figures \ref{fig:trajectory_3_comp_reg}, \ref{fig:trajectory_3_path_reg}. We then look at the path graph, where the bifurcations take place one at the time, and we see that, after the first bifurcation, the orbit while showing some chaotic behaviour it preserves some regular structure, Figure \ref{fig:trajectory_3_path_chaos_one}. After two bifurcations occurred, the orbit completely loses its regular structure and behaves chaotically, Figures \ref{fig:trajectory_3_comp_chaos}, \ref{fig:trajectory_3_path_chaos_two}. What we have just described is reflected in the Lypunov exponents. Indeed, we notice that after the bifurcations couples of Lyapunov exponents increase noticeably in magnitude, the number of couples is equal to the number of simultaneous bifurcations.

\begin{figure}[ht]
     \centering
     \begin{subfigure}[b]{0.45\textwidth}
         \centering
         \includegraphics[width=\textwidth]{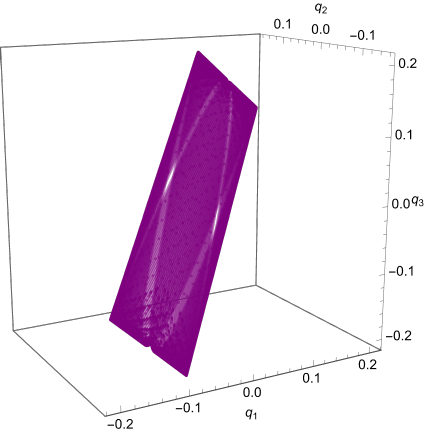}
         \caption{Regular 
         }
         \label{fig:trajectory_3_comp_reg}
     \end{subfigure}
     \begin{subfigure}[b]{0.45\textwidth}
         \centering
         \includegraphics[width=\textwidth]{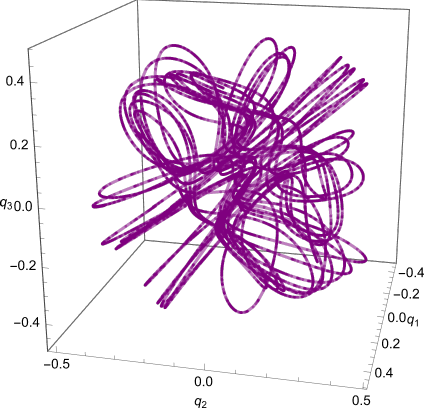}
         \caption{Chaotic 
         }
         \label{fig:trajectory_3_comp_chaos}
     \end{subfigure}
        \caption{The two orbits shown have the same initial conditions $(q_1,q_2,q_3,p_1,p_2,p_3)=(1/5,1/7,1/10,0,0,0)$. The figures represent the position's trajectory of the system. The critical parameter for the bifurcation is $\kappa^\text{crit}=1/6$, the simulations are done for $\kappa=1/8$, Figure \ref{fig:trajectory_3_comp_reg}, and $\kappa=1/4$, Figure \ref{fig:trajectory_3_comp_chaos}. The bifurcation is degenerate as it changes simultaneously the eigenvalues of the origin along two d.o.f.. Such degeneracy is reflected in the Lyapunov exponents, Table \ref{tab:lce}, where we can see that after the bifurcation two positive exponents undergoes a significative change in magnitude towards larger positive values. 
        A small remark regarding Figure \ref{fig:trajectory_3_comp_reg}. The motion of the regular orbit lies on a thick plane, whose thickness depends on the value of $\kappa$. The thickness will be larger for values of $\kappa$ close to zero or close to the bifurcation. The flattening is due to stronger contribution of the interaction potential.
        }
        \label{fig:three_nodes_complete}
\end{figure}

\begin{figure}[ht]
     \centering
     \begin{subfigure}[b]{0.45\textwidth}
         \centering
         \includegraphics[width=\textwidth]{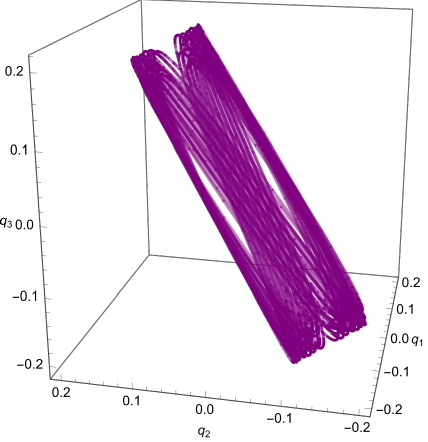}
         \caption{Regular 
         }
         \label{fig:trajectory_3_path_reg}
     \end{subfigure}
     \begin{subfigure}[b]{0.45\textwidth}
         \centering
         \includegraphics[width=\textwidth]{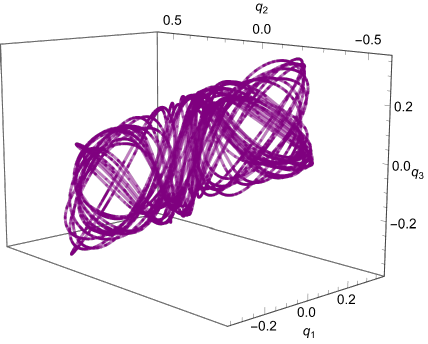}
         \caption{Chaotic 
         }
         \label{fig:trajectory_3_path_chaos_one}
     \end{subfigure}
     \begin{subfigure}[b]{0.45\textwidth}
         \centering
         \includegraphics[width=\textwidth]{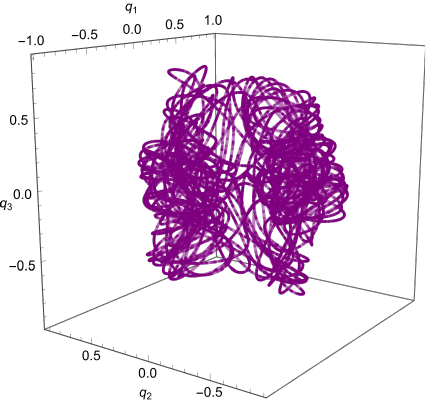}
         \caption{Chaotic 
         }
         \label{fig:trajectory_3_path_chaos_two}
     \end{subfigure}
        \caption{The three orbits shown have the same initial conditions $(q_1,q_2,q_3,p_1,p_2,p_3)=(1/5,1/10,1/7,0,0,0)$. 
        The figures represent the position's trajectory of the system. The critical parameters for the bifurcations are $\kappa^\text{crit}=1/6$ and $\kappa^\text{crit}=1/2$, the simulations are done for $\kappa=1/8$, Figure \ref{fig:trajectory_3_path_reg}, $\kappa=1/4$, Figure \ref{fig:trajectory_3_path_chaos_one}, and $\kappa=1$, Figure \ref{fig:trajectory_3_path_chaos_two}. In this case, the two bifurcations of the origin take place at different values of the coupling strength. The Lyapunov exponents, Table \ref{tab:lce}, display a correspondence between the bifurcation and the chaotic nature of the orbit. Indeed, after each of the bifurcations one positive exponent reach a considerably higher magnitude.
        }
        \label{fig:three_nodes_path}
\end{figure}
\section{Conclusions}

We characterised properties of synchrony and anti-synchrony spaces such as stability and bifurcations for a system of coupled pendula with non-negative interaction. Consequently, considering the oscillatory regime, i.e., low energies, we inspected the transitions to chaos of orbits close to the origin. The behaviour observed showcases a mechanism for the appearance of chaotic oscillations in conservative systems. For Hamiltonians with two degrees of freedom, it has been shown how chaos can emerge near a saddle-centre equilibrium possessing a homoclinic orbit \cite{mielke1992cascades}. In our paper, we have seen that higher-dimensional systems having mixed equilibria, i.e., generalisations of a saddle-center where the eigenvalues are only imaginary and reals, also show chaotic behaviour close to those points. Notice that, for the systems we studied, the homoclinic orbits appear, for example, in the invariant anti-synchrony spaces. Moreover, the combination of saddle and center behaviour is generic for the origin after bifurcation.

The relevance of studying bifurcations at low energies comes from the potential applications of the model studied. Hamiltonian functions that are bounded from below are a cornerstone for classical physical systems. Moreover, the ground state, or in other words the state with minimal energy, is of extreme relevance for quantum systems as well \cite{gozzi1983ground, spohn1989ground, nielsen2006cluster}. The model studied could also lead to interesting connections with breather solutions \cite{bambusi1996exponential, mackay2002effective, flach2008discrete, chernyavsky2016breathers}, where one could exploit in some detail the complex network structure and the relation to chaos.

Of course, the techniques we applied and developed can be adapted also to other nonlinear oscillators that are not necessarily pendula. Nevertheless, this is a potentially interesting research direction as it might be possible to identify interesting relations between the internal dynamics and the coupling. Also, different forms of interactions, beyond the diffusive form, can be explored in the future. For example, cross-term interactions lead in general to adjacency structures rather than Laplacian ones. 
Concerning detailed numerical results, we would like to point out that Hamiltonian systems present degeneracies that can even be enhanced by the potential symmetries of the network. As a consequence, a precise numerical bifurcation analysis is a non-trivial task. The starting point could be to use continuation software, e.g., MatCont, to explore, for instance, small network structures to see if relevant information can be extrapolated.

\vspace{6pt} 





\bibliography{bibliography.bib}
\bibliographystyle{abbrv}

\section*{Statements and Declarations}

\begin{description}
    \item[\textbf{Funding}] CK would like to thank the German Science Foundation (DFG) for support via a Sachbeihilfe grant (project number 444753754).
    RB and HJK declare that no funds, grants, or other support were received during the preparation of this manuscript.
    \item[\textbf{Conflict of Interest}] The authors have no relevant financial or non-financial interests to disclose.
\end{description}

\end{document}